\newtheoremstyle{mythm}{4pt}{4pt}{\itshape}{}{\bfseries}{.}{.5em}{}
\theoremstyle{mythm}
\newtheorem{lemma}{Lemma}[section]
\newtheorem{theorem}{Theorem}[section] 
\newtheoremstyle{myrek}{4pt}{4pt}{}{}{\itshape}{.}{0.5em}{}
\theoremstyle{definition}
\newtheorem{definition}{Definition}[section]  
\theoremstyle{myrek}
\newtheorem{remark}{\bf Remark}[section]
\begin{document}
\begin{frontmatter}
\title{{\textbf{Distributivity between extended nullnorms and uninorms on fuzzy truth values}}
\thanksref{thk}}

\thanks[thk]{This work is supported by National Natural Science Foundation of China (No. 11171242).}

\author{Zhi-qiang Liu},
\author{Xue-ping Wang\corauthref{cor}}\qquad
\corauth[cor]{Corresponding author. Fax: +86 28 84761393.}
\ead{xpwang1@hotmail.com}
\address{School of Mathematical Sciences, Sichuan Normal University, Chengdu, Sichuan 610066, P.R. China}
\begin{abstract}
This paper mainly investigates the distributive laws between extended nullnorms and uninorms on fuzzy truth values under the condition that the nullnorm is conditionally distributive over the uninorm. It presents the distributive laws between the extended nullnorm and t-conorm, and the left and right distributive laws between the extended generalization nullnorm and uninorm, where a generalization nullnorm is an operator from the class of aggregation operators with absorbing element that generalizes a nullnorm.
\end{abstract}
\begin{keyword} Fuzzy truth values; Extended nullnorm; Extended uninorm; Distributive law
\end{keyword}
\end{frontmatter}

\section{Introduction}

The concept of a type-2 fuzzy set was introduced by Zadeh in 1975 \cite{Zadeh1975} as an extension of type-1 fuzzy sets, and it has been heavily
investigated both as a mathematical object and for use in applications \cite{Walker2005,Zadeh1975}. The algebra of truth values for fuzzy sets of type-2 consists of all mappings from the unit interval into itself and their operations which are convolutions of operations on the unit interval \cite{Walker2005}.
The algebra theory was studied extensively by Harding, C. and E. Walker \cite{Harding2016}, and C. and E. Walker \cite{Walker2005,WALKER2006,Walker2009}.
Theory of aggregation of real numbers play an important role in many different theoretical and practical fields, e.g., decision making theory, fuzzy set theory, integration theory, ect. Aggregation operators for real numbers are extended to ones for type-2 fuzzy sets. For example,
Gera and Dombi \cite{Gera20081} proposed computationally simple, pointwise formulas for extended t-norms and t-conorms on fuzzy truth values;
Tak\'{a}\u{c} \cite{Takac2014} investigated extended aggregation operations on the algebra of convex normal fuzzy truth values with their left and right parts;
Torres-Blanc, Cubillo, and Hern\'{a}ndez \cite{Torres2017} applied the Zadeh's extension principle to extend the aggregation operations of type-1 to the case of tyep-2 fuzzy sets. In particular, the distributive laws between those convolution operations on fuzzy truth values become an interesting and natural research area, so that they are discussed in many articles.
For instance, Harding, C. and E. Walker \cite{Harding2016} and C. and E. Walker \cite{Walker2005,Walker2009} discussed the distributive laws between extended minimums and maximums, and extended maximums and minimums, respectively, the distributive laws between extended t-norms and maximums, and the distributive laws between extended t-conorms and minimums.
Hu and Kwong \cite{HuBQ2014} also presented the distributive laws between extended t-norms and maximums, and the distributive laws between extended t-conorms and minimums. Xie \cite{Xie2018} extended type-1 proper nullnorms and proper uninorms to fuzzy truth values and studied the distributive laws between the extended uninorms and minimums, and the distributive laws between the extended uninorms and maximums.
Recently, Liu and Wang \cite{Liu2019} discussed distributivity between extended t-norms and t-conorms on fuzzy truth values under the condition that the t-norm is conditionally distributive over the t-conorm or the t-conorm is conditionally distributive over the t-norm. It is well known that uninorms \cite{Yager1996} and nullnorms \cite{Calvo2001} are aggregation operations with neutral elements and absorbing elements on $[0,1]$, respectively. They are generalizations of t-norms and t-conorms as well. However, the distributive laws between the extended nullnorms and uninorms on fuzzy truth values are not discussed till now, so that this paper will investigate these problems based on the results of conditionally distributivity of nullnorms over the uninorms in \cite{Dragan2013,Dragan2015,LiG2015}.

This paper is organized as follows. In Section 2 we recall some necessary definitions and previous results.
In Section 3 we investigate the distributive laws between extended nullnorms and uninorms on fuzzy truth values under the condition that the nullnorm is conditionally distributive over the uninorm. In Section 4 we study distributivity of extended continuous operators with absorbing element and extended uninorms.
A conclusion is given in Section 5.

\section{Previous Results}

In this section, we recall some basic concepts and terminologies used throughout the paper.

\begin{definition}[\cite{Klement2000}]
A t-norm (resp. t-conorm) is a binary operation $T:[0,1]^{2}\rightarrow[0,1]$ (resp. $S:[0,1]^{2}\rightarrow[0,1]$) that is commutative, associative, non-decreasing in each variable, and has a neutral element $1$ (resp. $0$).
\end{definition}

\begin{definition}[\cite{Klement2000}]
\quad

(i) A t-norm $T$ is said to be strict, if $T$ is continuous and strictly monotone.

(ii) A t-norm $T$ is said to be nilpotent, if $T$ is continuous and if each $x\in(0,1)$ is a nilpotent element of $T$.
\end{definition}

The basic continuous t-norms are minimum, $T_{M}(x,y)=\min(x,y)$, the product, $T_{P}(x,y)=xy$, and the {\L}ukasiewicz t-norm, $T_{L}(x,y)=\max(x+y-1,0)$. Dually, the basic continuous t-conorms are maximum, $S_{M}(x,y)=\max(x,y)$, the probabilistic sum, $S_{P}(x,y)=x+y-xy$, and the {\L}ukasiewicz t-conorm, $S_{L}(x,y)=\min(x+y,1)$.

\begin{definition}[\cite{Klement2000}]
A binary function $U : [0,1]^{2}\rightarrow[0,1]$ is called a uninorm  if it is commutative, associative, non-decreasing in each place and there exists
some element $e\in[0,1]$ such that $U(x,e)=x$ for all $x\in[0,1]$ where $e$ is called a neutral element of $U$.
\end{definition}

One can see that a uninorm $U$ is a t-norm if $e=1$, and a t-conorm if $e=0$. A uninorm $U$ is called proper if its neutral element $e\in(0,1)$. It is clear that $U(0,1)\in\{0,1\}$ (see \cite{Fodor1997}). $U$ is said to be conjunctive if $U(1,0)=0$, and be disjunctive if $U(1,0)=1$.

With any uninorm $U$ with neutral element $e\in(0,1)$, we can associate two binary operations $T_{U}$ and $S_{U}: [0,1]^{2}\rightarrow [0,1]$ defined by
\begin{eqnarray*}
T_{U}(x,y)=\frac{U(ex,ey)}{e}    \mbox{ \ \ \ and \ \ \ }
S_{U}(x,y)=\frac{U((e+(1-e)x,e+(1-e)y))-e}{1-e},
\end{eqnarray*}
respectively. It is easy to see that $T_{U}$ is a t-norm and that $S_{U}$ is a t-conorm where $T_{U}$ is called an underlying t-norm, and $S_{U}$ is called an underlying t-conorm.
Let us denote the remaining part of the unit square by $E$, i.e., $E=[0,1]^{2}\setminus([0,e]^{2}\cup[e,1]^{2})$. On the set $E$, any uninorm $U$ is bounded by the minimum and maximum of its arguments, i.e., for any $(x,y)\in E$, $$\min(x,y)\leqslant U(x,y)\leqslant \max(x,y).$$

The most studied classes of uninorms are:

$\bullet$ Idempotent uninorms in $\mathcal{U}_{id}$\cite{Baets,LiG2015}, those that satisfy $U(x,x)=x$ for all $x\in[0,1]$.

$\bullet$ Uninorms in $\mathcal{U}_{\min}$ (resp. $\mathcal{U}_{\max}$)\cite{Fodor1997,LiG2015}, those given by minimum (resp. maximum) in $E$.

$\bullet$ Uninorms in $\mathcal{CU}$\cite{HuSK2001,LiG2015}, those that are continuous in the open square $(0,1)^{2}$.

$\bullet$ Uninorms in $\mathcal{WCU}$\cite{Calvo20015,Saminger2007,LiG201501,LiG2015}, those that are with a continuous underlying t-norm and t-conorm.

\begin{definition}[\cite{Calvo2001,Klement2000}]\label{defi2.4}
A nullnorm is a binary operation $F : [0,1]^{2}\rightarrow[0,1]$, which is commutative, associative, non-decreasing in each variable and there exists an
element $k\in[0,1]$ such that $F(0,x)=x$ for all $x\in [0, k]$ and $F(1,x)=x$ for all $x\in [k,1]$.
\end{definition}

Clearly, a nullnorm $F$ is a t-norm if $k=0$, and a t-conorm if $k=1$. If $k\in(0,1)$, then a nullnorm $F$ is called proper. It is immediately clear that every nullnorm $F$ satisfies $F(k,x)=k$ for all $x\in[0,1]$, i.e., $k$ is an absorbing element of $F$.

\begin{definition}[\cite{Walker2005}]
Fuzzy truth values are mapping of $[0,1]$ into itself. The set of fuzzy truth values is denoted by $\mathcal{F}=\{f \mid f: [0,1]\rightarrow [0,1]\}$.
\end{definition}

According to Zadeh's extension principle, a two-place function $G: [0,1]^{2}\rightarrow[0,1]$ can be extended to $\odot_{G}:\mathcal{F}^{2}\rightarrow\mathcal{F}$ by the convolution of $G$ with respect to $\wedge$ and $\vee$. Let $f,g\in\mathcal{F}$. Then $(f\odot_{G} g)(z)=\bigvee\limits_{z=G(x,y)}(f(x)\wedge g(y))$. Here, $\odot_{G}$ is called the extended $G$.

If $G$ is a nulllnorm $F$ or a uninorm $U$, then we have its extended nullnorm or uninorm defined by
\begin{eqnarray}\label{eqn41}
(f\odot_{F} g)(z)=\bigvee_{F(x,y)=z}(f(x)\wedge g(y))
\end{eqnarray}
and
\begin{eqnarray}\label{eqn42}
(f\odot_{U} g)(z)=\bigvee_{U(x,y)=z}(f(x)\wedge g(y)),
\end{eqnarray}
 respectively (see \cite{Xie2018}). In particular, if $G$ is the t-norm $T_{M}=\min$ or t-conorm $S_{M}=\max$, then we use $\sqcap$ and $\sqcup$ instead of $\odot_{F}$ and $\odot_{U}$, respectively (see \cite{Walker2005}), i.e.,
\begin{eqnarray}\label{eq413}
(f\sqcap g)(z)=\bigvee\limits_{x\wedge y=z}(f(x)\wedge g(y)),
\end{eqnarray}
\begin{eqnarray}\label{eq414}
(f\sqcup g)(z)=\bigvee\limits_{x\vee y=z}(f(x)\wedge g(y)).
\end{eqnarray}

\begin{definition}[\cite{Walker2005}]\label{conves1}
An element $f\in \mathcal{F}$ is said to be convex if for all $x,y,z\in[0,1]$ for which $x\leqslant y\leqslant z$, we have $f(y)\geqslant f(x)\wedge f(z)$.
\end{definition}

\section{Distributive laws between the extended nullnorms and uninorms}

In this section, the distributive laws between the extended nullnorms and uninorms on fuzzy truth values are discussed.

\begin{theorem}\label{theor41}
Let $F:[0,1]^{2}\rightarrow[0,1]$ be a continuous non-decreasing operator. If $f\in \mathcal{F}$ is convex, then the following statements hold for all $g,h\in\mathcal{F}$.
\begin{enumerate}
\item [(i)] $f\odot_{F}(g\sqcap h)=(f\odot_{F}g)\sqcap(f\odot_{F}h)$;
\item [(ii)] $f\odot_{F}(g\sqcup h)=(f\odot_{F}g)\sqcup(f\odot_{F}h)$.
\end{enumerate}
\end{theorem}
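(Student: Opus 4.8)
The plan is to prove (i) pointwise, evaluating both sides at an arbitrary $z \in [0,1]$ and showing the two suprema agree; part (ii) is entirely symmetric (replacing $\wedge$ by $\vee$ and $\sqcap$ by $\sqcup$), so I would prove (i) in detail and then remark that (ii) follows \emph{mutatis mutandis}. First I would unfold the definitions: using \eqref{eqn41} and \eqref{eq413}, the left-hand side at $z$ is $\bigvee_{F(x,y)=z}\big(f(x) \wedge (g\sqcap h)(y)\big) = \bigvee_{F(x,y)=z}\big(f(x) \wedge \bigvee_{u \wedge v = y}(g(u)\wedge h(v))\big)$, which by distributivity of $\wedge$ over arbitrary $\vee$ in $[0,1]$ equals $\bigvee\{f(x)\wedge g(u) \wedge h(v) : F(x, u\wedge v) = z\}$. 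On the right-hand side, $(f\odot_F g)\sqcap(f\odot_F h)$ at $z$ is $\bigvee_{a \wedge b = z}\big((f\odot_F g)(a) \wedge (f\odot_F h)(b)\big) = \bigvee\{f(x_1)\wedge g(u) \wedge f(x_2) \wedge h(v) : F(x_1,u)\wedge F(x_2,v) = z\}$. So the whole theorem reduces to comparing these two index sets of triples/quadruples.

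The inclusion ``LHS $\leqslant$ RHS'' is the easy direction and does not use convexity: given $x,u,v$ with $F(x, u\wedge v) = z$, take $x_1 = x_2 = x$; since $F$ is non-decreasing, $F(x,u)\wedge F(x,v) \geqslant F(x, u\wedge v)$... wait — I need $F(x,u)\wedge F(x,v) = F(x,u\wedge v)$, which holds because $u \wedge v$ equals either $u$ or $v$, so one of $F(x,u), F(x,v)$ already equals $F(x,u\wedge v)$ and the other is $\geqslant$ it. Hence $F(x_1,u)\wedge F(x_2,v) = z$, and the term $f(x)\wedge g(u)\wedge h(v)$ appears on the right. The reverse inequality ``RHS $\leqslant$ LHS'' is where the work and the convexity hypothesis enter. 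Given $x_1, x_2, u, v$ with $F(x_1,u)\wedge F(x_2,v) = z$, I must produce a single $x$ and values realizing $F(x, u\wedge v) = z$ with $f(x) \geqslant f(x_1)\wedge f(x_2)$ (the $g,h$ parts carry over unchanged). The natural candidate is to take $x$ between $x_1$ and $x_2$, so that convexity of $f$ gives $f(x) \geqslant f(x_1)\wedge f(x_2)$; one then needs $F(x, u\wedge v)$ to hit the value $z = F(x_1,u)\wedge F(x_2,v)$. Here continuity of $F$ (together with monotonicity) is the tool: as $x$ ranges over $[\min(x_1,x_2), \max(x_1,x_2)]$, the value $F(x, u\wedge v)$ sweeps continuously, and one shows $z$ lies in its range by checking it is sandwiched between $F(x_1, u\wedge v)$-type and $F(x_2, u\wedge v)$-type endpoints, using a case split according to which of $u,v$ is the minimum and which of $F(x_1,u), F(x_2,v)$ is the minimum.

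The main obstacle I anticipate is exactly this last step: verifying in each case that $z = F(x_1,u)\wedge F(x_2,v)$ actually lies in the interval $\big[F(x_1,u\wedge v)\wedge F(x_2,u\wedge v),\ F(x_1,u\wedge v)\vee F(x_2,u\wedge v)\big]$ (or the appropriate analogue), so that the Intermediate Value Theorem applied to the continuous map $t \mapsto F(t, u\wedge v)$ yields the desired $x$ strictly between $x_1$ and $x_2$. This is a finite case analysis — WLOG $u \leqslant v$ so $u\wedge v = u$, and then separately $F(x_1,u)\leqslant F(x_2,v)$ versus $F(x_1,u) > F(x_2,v)$ — in each branch one uses monotonicity of $F$ to bound $z$ from both sides by values of $F(\cdot, u)$ at $x_1$ and $x_2$. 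Once $x$ is found, convexity of $f$ closes the argument. I would organize the proof as two lemmatic inequalities (the easy inclusion, then the convexity-driven one), and for (ii) simply note that the order-dual argument — with $\vee$ in place of $\wedge$ inside the convolutions and the roles of min/max interchanged — goes through verbatim, again using continuity and monotonicity of $F$ and convexity of $f$.
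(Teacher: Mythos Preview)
Your proposal is correct and follows essentially the same route as the paper's proof: both argue pointwise, obtain the easy inequality from $F(x,u\wedge v)=F(x,u)\wedge F(x,v)$, and for the reverse inequality perform a case split on which of $F(x_1,u),F(x_2,v)$ attains the minimum, invoking the Intermediate Value Theorem for the continuous map $t\mapsto F(t,u\wedge v)$ together with convexity of $f$ to produce the required $x$ between $x_1$ and $x_2$. Your WLOG reduction $u\leqslant v$ streamlines the case analysis slightly compared to the paper's three-case split, but the substance is identical.
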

\begin{proof}
We only provide the proof of statement (i), the statement of (ii) being analogous.

According to formulas (\ref{eqn41}) and (\ref{eq413}), for all $z\in[0,1]$, we have
\begin{eqnarray*}
\left(f\odot_{F}(g\sqcap h)\right)(z)=\bigvee\limits_{F(y,u\wedge v)=z}f(y)\wedge g(u)\wedge h(v)
\end{eqnarray*}
and
\begin{eqnarray*}
\left((f\odot_{F}g)\sqcap(f\odot_{F}h)\right)(z)&=&\bigvee\limits_{F(p,q)\wedge F(s,t)=z}f(p)\wedge g(q)\wedge f(s)\wedge h(t).
\end{eqnarray*}
On one hand, due to $F(y,u\wedge v)=F(y,u)\wedge F(y,v)$, it holds that
\begin{eqnarray}\label{eqna40}
\left(f\odot_{F}(g\sqcap h)\right)(z)\leqslant\left((f\odot_{F}g)\sqcap(f\odot_{F}h)\right)(z).
\end{eqnarray}
On the other hand, suppose that $z=F(p,q)\wedge F(s,t)$. It is easy to see if there exists $y\in[0,1]$ such that both of the following hold then the reverse inequality of (\ref{eqna40}) is holds.
\begin{eqnarray}\label{eqna41}
z=F(y,q\wedge t)
\end{eqnarray}
and
\begin{eqnarray}\label{eqna42}
f(y)\wedge g(q)\wedge h(t)\geqslant f(p)\wedge g(q)\wedge f(s)\wedge h(t).
\end{eqnarray}

Next, we shall prove formulas (\ref{eqna41}) and (\ref{eqna42}). From $z=F(p,q)\wedge F(s,t)$, we distinguish three cases.

(i) If $F(p,q)=F(s,t)=z$, then let $y=p\wedge s$. Thus $F(y,q\wedge t)=F(y,q)\wedge F(y,t)=z$, and $f(y)=f(p)$ or $f(y)=f(s)$. Therefore, the formulas (\ref{eqna41}) and (\ref{eqna42}) are hold.

(ii) If $F(p,q)>z$ and $F(s,t)=z$, then we have the following two subcases.

\begin{enumerate}
  \item []
  \begin{enumerate}
  \item [{\rm $\bullet$}] If $F(s,q)\geqslant z$, then $F(s,q)\wedge F(s,t)=F(s,t)=z$, and put $y=s$. Then both (\ref{eqna41}) and (\ref{eqna42}) hold.
  \item [{\rm $\bullet$}] If $F(s,q)<z$, then $F(s,q)<F(s,t)$ means $q<t$. Moreover, $F(p,q)>z>F(s,q)$ implies $s<p$. Because $F$ is continuous, there exists a $y$ with $s<y<p$ such that $F(y,q)=z$, and $f(y)\geqslant f(p)\wedge f(s)$ since $f$ is convex. Hence both (\ref{eqna41}) and (\ref{eqna42}) hold.
  \end{enumerate}
\end{enumerate}

(iii) If $F(p,q)=z$ and $F(s,t)>z$, then similar to (ii), we can get that (\ref{eqna41}) and (\ref{eqna42}).

With Cases (i), (ii) and (iii), we always know that both (\ref{eqna41}) and (\ref{eqna42}) hold. Therefore,
$$\left(f\odot_{F}(g\sqcap h)\right)(z)\geqslant\left((f\odot_{F}g)\sqcap(f\odot_{F}h)\right)(z).$$

This completes the proof.
\end{proof}

Note that Theorem \ref{theor41} generalizes the sufficiency of Proposition 3.9 in \cite{HuBQ2014} (see also the conclusions 1 and 4 of Theorem 5.5.3 in \cite{Harding2016}).

\begin{lemma}[\cite{Martín2003}]\label{le3.1}
Consider $e\in(0,1)$. The following statements are equivalent:
\begin{enumerate}
\item [(i)] $U$ is an idempotent uninorm with neutral element $e$.
\item [(ii)] There exists a non-increasing function $g:[0,1]\rightarrow[0,1]$, symmetric with respect to the main diagonal, with $g(e)=e$, such that, for all $(x,y)\in E$
    \begin{eqnarray}
U(x,y)=\left\{\begin{array}{ll}
\min(x,y),    & {\mbox{\scriptsize\normalsize if }y<g(x) \mbox{ or }y=g(x) \mbox{ and }x<g(g(x)),}\\
\max(x,y),    & {\mbox{\scriptsize\normalsize if }y>g(x) \mbox{ or }y=g(x) \mbox{ and }x>g(g(x)),}\\
x \mbox{ or } y,  & {\mbox{\scriptsize\normalsize if }y=g(x)   \mbox{ and }x=g(g(x))}\\
\end{array}
\right.
\end{eqnarray}
being commutative on the set of points $(x,y)$ such that $y=g(x)$ with $x=g(g(x))$.
\end{enumerate}
\end{lemma}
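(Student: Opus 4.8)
The plan is to prove the two implications separately; almost all of the work lies in (i) $\Rightarrow$ (ii), while the converse is essentially a verification.

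For (i) $\Rightarrow$ (ii), I would first pin down $U$ on the two corner squares. Because $U$ is idempotent, its underlying t-norm satisfies $T_{U}(x,x)=U(ex,ex)/e=ex/e=x$, so $T_{U}$ is the unique idempotent t-norm $\min$, whence $U=\min$ on $[0,e]^{2}$; dually $S_{U}=\max$ and $U=\max$ on $[e,1]^{2}$. The crucial step is then to show that $U(x,y)\in\{x,y\}$ for every $(x,y)$. This is already clear on the corner squares, so I take $(x,y)\in E$, say $x<e<y$ (the case $y<e<x$ being symmetric), and set $z=U(x,y)$; the general estimate on $E$ gives $x\leqslant z\leqslant y$. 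Associativity and idempotency yield $U(x,z)=U(U(x,x),y)=z$ and $U(z,y)=U(x,U(y,y))=z$, and a short case split on the position of $z$ relative to $e$ excludes $x<z<y$: if $z<e$ then $U(x,z)=\min(x,z)=x\neq z$; if $z>e$ then $U(z,y)=\max(z,y)=y\neq z$; and if $z=e$ then $x=U(x,e)=U(x,U(x,y))=U(U(x,x),y)=U(x,y)=e$, contradicting $x<e$. Hence $z\in\{x,y\}$. I expect this dichotomy to be the main obstacle, since it is exactly what makes the ``$\min$-or-$\max$'' shape available.

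With the dichotomy in hand I would read off $g$ from the cross-sections: for $x\leqslant e$ put $g(x)=\sup\{y\in[0,1]:U(x,y)=\min(x,y)\}$, and symmetrically (or by the same formula) for $x\geqslant e$. Monotonicity of $y\mapsto U(x,y)$ together with the dichotomy shows that $\{y:U(x,y)=\max(x,y)\}$ is upward closed, because $U(x,y_{1})=y_{1}>x$ and $y_{2}>y_{1}$ force $U(x,y_{2})\geqslant y_{1}>x$, hence $U(x,y_{2})=y_{2}$; consequently on $E$ one gets $U(x,y)=\min(x,y)$ for $y<g(x)$ and $U(x,y)=\max(x,y)$ for $y>g(x)$, with ambiguity only on the curve $y=g(x)$ and, by the same up-set/down-set description, only when moreover $x=g(g(x))$. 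That $g$ is non-increasing follows from monotonicity of $U$ in its first place (if $x_{1}<x_{2}$ but $g(x_{1})<g(x_{2})$, a $y$ strictly between them would give $U(x_{1},y)>U(x_{2},y)$); symmetry of the graph of $g$ about the main diagonal is precisely commutativity of $U$; and $g(e)=e$ comes from $U(e,y)=y$, which places the threshold of the $x=e$ cross-section at $e$.

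For (ii) $\Rightarrow$ (i), given such a $g$ I would define $U$ to be $\min$ on $[0,e]^{2}$, $\max$ on $[e,1]^{2}$, and the displayed formula on $E$ (selecting, on the diagonal set $y=g(x)$ with $x=g(g(x))$, the common value supplied by the commutativity hypothesis); since $\min$ and $\max$ agree on the shared boundaries $x=e$ and $y=e$, this is consistent across the pieces, and $U$ is well defined. Commutativity is immediate from that of $\min$, $\max$ and the extra hypothesis; idempotency holds because the diagonal of $[0,1]^{2}$ never meets $E$ while $\min$ and $\max$ are idempotent; neutrality of $e$ follows from $(x,e)\in[0,e]^{2}$ for $x\leqslant e$ and $(x,e)\in[e,1]^{2}$ for $x\geqslant e$. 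Monotonicity in each variable is checked piecewise and across the interface, and is exactly where one uses that $g$ is non-increasing, which keeps the $\min$-region and the $\max$-region nested as $x$ grows. The last and most laborious point is associativity: I would verify $U(U(x,y),z)=U(x,U(y,z))$ by a systematic case analysis on the locations of $x$, $y$, $z$ relative to $e$ and to the graph of $g$, repeatedly collapsing subterms to the corner squares where $U$ is $\min$ or $\max$ and using the symmetry and monotonicity of $g$ (and the diagonal convention) to close the remaining cases. On this side I expect associativity to be the principal obstacle.
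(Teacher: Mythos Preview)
The paper does not give a proof of this lemma at all: it is quoted verbatim as a known result from \cite{Mart��n2003} and used as a black box to derive the subsequent theorem on extended idempotent uninorms. So there is no ``paper's own proof'' against which to compare your attempt.

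That said, your outline is broadly the standard route to this characterisation. The dichotomy step $U(x,y)\in\{x,y\}$ is the heart of (i)$\Rightarrow$(ii) and your argument for it is correct. One point that deserves more care is the definition and properties of $g$: the statement requires $g$ to be \emph{symmetric with respect to the main diagonal} (i.e.\ $y=g(x)\Leftrightarrow x=g(y)$, in the appropriate sense), and you should check that your supremum definition actually yields this, not merely that the graph of $g$ is invariant under reflection; in particular, at points where the $\min$/$\max$ regions touch, the precise boundary behaviour encoded by the three cases in the displayed formula (especially the role of $g(g(x))$) has to be recovered from how the sup is or is not attained. Your claim that ambiguity occurs ``only when moreover $x=g(g(x))$'' is true but needs a line of justification. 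For (ii)$\Rightarrow$(i), your plan is fine, with associativity indeed being the tedious part; since the paper simply cites the result, there is nothing further to compare.
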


\begin{remark}\label{remark4}
The first uninorms, which were constructed by Yager and Rybalov\cite{Yager1996}, are idempotent uninorms from classes $\mathcal{U}_{\min}$ and $\mathcal{U}_{\max}$ of the following form:
\begin{eqnarray*}
\underline{U}(x,y)=\left\{\begin{array}{ll}
\max(x,y) , & {\mbox{\scriptsize\normalsize if }(x,y)\in[e,1]^{2},}\\
\min(x,y),    &  {\mbox{\scriptsize\normalsize otherwise}}
\end{array}
\right.
\end{eqnarray*}
and
\begin{eqnarray*}
\overline{U}(x,y)=\left\{\begin{array}{ll}
\min(x,y) , & {\mbox{\scriptsize\normalsize if }(x,y)\in[0,e]^{2},}\\
\max(x,y),    &  {\mbox{\scriptsize\normalsize otherwise. }}
\end{array}
\right.
\end{eqnarray*}
\end{remark}
These uninorms are the only explicit examples of idempotent uninorms.

From Theorem \ref{theor41} and Lemma \ref{le3.1}, we immediately have the following result.
\begin{theorem}
Let $F$ be a continuous nulllnorm and $U$ an idempotent uninorm. If $f\in \mathcal{F}$ is convex, then the following holds for all $g,h\in\mathcal{F}$.
$$f\odot_{F}(g\odot_{U} h)=(f\odot_{F}g)\odot_{U}(f\odot_{F}h).$$
\end{theorem}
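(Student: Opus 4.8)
The plan is to reduce the identity to Theorem \ref{theor41} by replacing the extended idempotent uninorm with its local building blocks, namely the extended minimum $\sqcap$ and the extended maximum $\sqcup$. If $e\in\{0,1\}$ then $U$ is $T_M$ or $S_M$ and the assertion is exactly Theorem \ref{theor41}(i) or (ii), so I would assume $e\in(0,1)$ and invoke Lemma \ref{le3.1}. Since the underlying t-norm and t-conorm of an idempotent uninorm are $T_M$ and $S_M$, and since Lemma \ref{le3.1} forces $U$ to coincide with $\min$ or $\max$ on $E$, the unit square splits as $[0,1]^2=A\cup B$ with $U|_A=\min$, $U|_B=\max$, $[0,e]^2\subseteq A$, $[e,1]^2\subseteq B$, and the interface between $A$ and $B$ along $E$ governed by the symmetric non-increasing function $g$ of Lemma \ref{le3.1}; in particular $U(x,y)\in\{x,y\}$ everywhere.

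Using this description I would next compute $p\odot_U q$ for $p,q\in\mathcal{F}$ in terms of $\sqcap$ and $\sqcup$. For the uninorms $\underline U,\overline U$ of Remark \ref{remark4} this is clean: one checks directly from the definitions that, for $z<e$, $(p\odot_{\underline U}q)(z)=(p\sqcap q)(z)$, while for $z\geqslant e$, $(p\odot_{\underline U}q)(z)$ equals the value at $z$ of the extended maximum of the restrictions of $p$ and $q$ to $[e,1]$ (set to $0$ below $e$); symmetrically for $\overline U$, with the roles of $\sqcap$, $\sqcup$, $[0,e]$, $[e,1]$ interchanged. For a general idempotent $U$ one obtains the same shape of formula, with the break point and the truncation intervals read off from the graph of $g$. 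The same computation then gives the analogous decomposition of the right-hand side $(f\odot_F g)\odot_U(f\odot_F h)$.

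Now Theorem \ref{theor41} does the real work: because $f$ is convex we have $f\odot_F(p\sqcap q)=(f\odot_F p)\sqcap(f\odot_F q)$ and $f\odot_F(p\sqcup q)=(f\odot_F p)\sqcup(f\odot_F q)$, so on each piece of the decomposition — below the break point, and above it — the two sides of the desired equality collapse to the same thing once $f\odot_F$ is pushed inside $\sqcap$ respectively $\sqcup$ (using, above the break point, that $f\odot_F$ also respects the truncation). Reassembling the pieces should then give $f\odot_F(g\odot_U h)=(f\odot_F g)\odot_U(f\odot_F h)$.

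The step I expect to be the main obstacle is exactly this reassembly. One has to verify that applying $f\odot_F$ does not let a contribution migrate across the break point or across a truncation boundary — concretely, that $F$ carries the sub-squares $[0,e]^2$, $[e,1]^2$ and the $E$-regions $A,B$ into the correct parts of the $A/B$ partition of $U$, so that the decomposition of $g\odot_U h$ is genuinely transformed by $f\odot_F$ into the decomposition of $(f\odot_F g)\odot_U(f\odot_F h)$ and not into something that mixes the pieces. For a general continuous nullnorm $F$ this region-tracking is delicate; it is where the continuity of $F$, the convexity of $f$, and the structural compatibility between $F$ and $U$ (the conditional distributivity of $F$ over $U$ running through the section) enter, and it must be carried out carefully using the non-increasing profile $g$ of Lemma \ref{le3.1}. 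An alternative route, avoiding the explicit decomposition, is to imitate the proof of Theorem \ref{theor41} line by line: run the three-case analysis on a representation $z=U(F(p,q),F(s,t))$, replacing at each case the symbol $\wedge$ by the locally valid $\min$ or $\max$ supplied by Lemma \ref{le3.1}, with the interpolation step (the intermediate $y$ obtained from continuity of $F$ and convexity of $f$) carried over verbatim.
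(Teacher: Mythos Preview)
The paper offers no argument beyond the line ``From Theorem~\ref{theor41} and Lemma~\ref{le3.1}, we immediately have the following result,'' so there is little to compare against. You have gone further and correctly isolated the crux: whether $f\odot_F$ can carry a contribution across the $\min/\max$ boundary of $U$. It can, and the statement as written is false. Take $F$ the continuous nullnorm with absorbing element $k=\tfrac12$ whose t-conorm part on $[0,\tfrac12]^2$ is the scaled {\L}ukasiewicz t-conorm, so $F(x,y)=\min(x+y,\tfrac12)$ there; take $U=\overline{U}$ with $e=0.3$ (idempotent by Remark~\ref{remark4}); take $f=\mathbf{1}_{\{0.25\}}$ (a convex singleton indicator), $g=\mathbf{1}_{\{0.1\}}$, $h=\mathbf{1}_{\{0.2\}}$. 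Then $g\odot_U h=\mathbf{1}_{\{0.1\}}$ and $f\odot_F(g\odot_U h)=\mathbf{1}_{\{F(0.25,0.1)\}}=\mathbf{1}_{\{0.35\}}$, whereas $f\odot_F g=\mathbf{1}_{\{0.35\}}$, $f\odot_F h=\mathbf{1}_{\{F(0.25,0.2)\}}=\mathbf{1}_{\{0.45\}}$, and $(f\odot_F g)\odot_U(f\odot_F h)=\mathbf{1}_{\{U(0.35,0.45)\}}=\mathbf{1}_{\{0.45\}}$. This is precisely the migration you flagged: $(0.1,0.2)$ sits in the $\min$-region $[0,e]^2$ of $\overline{U}$, but $F(0.25,\cdot)$ pushes it to $(0.35,0.45)$ in the $\max$-region.

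Your instinct that conditional distributivity of $F$ over $U$ is what secures the region-tracking is exactly the missing hypothesis. With (CD), an idempotent $U\in\mathcal{WCU}$ has $S_U=S_M$ and lands in case (i) or (iii) of Lemma~\ref{theor44} (or its conjunctive analogue Lemma~\ref{theor414}); the claim then really does reduce to Theorem~\ref{theor41}(i),(ii) in the way the paper intends --- but it becomes a special case of Theorem~\ref{5theo3.4} rather than an independent result. Without (CD), neither your decomposition route nor the ``imitate Theorem~\ref{theor41} line by line'' route can succeed, because the pointwise identity $F(y,U(u,v))=U(F(y,u),F(y,v))$ on which both rely is simply false in general.
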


\begin{definition}
Let $F:[0,1]^{2}\rightarrow[0,1]$ be a continuous non-decreasing operator and $U$ be a uninorm.
  \begin{enumerate}
  \item [{\rm $\bullet$}] $F$ is conditionally distributive over $U$ from the left (CDl) if $F(x,U(y,z))=U(F(x,y),F(x,z))$ for all $x,y,z\in[0,1]$ whenever $U(y,z)<1$.

  \item [{\rm $\bullet$}] $F$ is conditionally distributive over $U$ from the right (CDr) if $F(U(x,y),z)=U(F(x,z),F(y,z))$ for all $x,y,z\in[0,1]$ whenever $U(x,y)<1$.
\end{enumerate}
\end{definition}

Of course, for a commutative operator $F$ (CDl) and (CDr) coincides and are denoted by (CD).

In the sequel, we study the distributive laws between the extended nullnorm and uninorm on fuzzy truth values under the condition that the nullnorm is conditionally distributive over the uninorm. First, we need the lemma as follows.

\begin{lemma}[\cite{LiG2015}]\label{theor44}
A continuous nullnorm $F$ with an absorbing element $k\in(0,1)$ and a disjunctive uninorm $U\in\mathcal{WCU}$ with neutral element $e\in(0,1)$ satisfy (CD) if and only if one of the following cases is fulfilled:

\begin{enumerate}
  \item []
  \begin{enumerate}
  \item [{\rm (i)}]
$e<k$ and $F,U$ are given as in \cite{Mas2002} (Proposition 4.2), i.e.,
\begin{eqnarray}\label{eqn45}
U(x,y)=\left\{\begin{array}{ll}
\min(x,y) , & {\mbox{\scriptsize\normalsize if }(x,y)\in[0,e]^{2},}\\
\max(x,y),    &  {\mbox{\scriptsize\normalsize otherwise }}
\end{array}
\right.
\end{eqnarray}
and
\begin{eqnarray}\label{eqn46}
F(x,y)=\left\{\begin{array}{ll}
eS_{1}(\frac{x}{e},\frac{y}{e}) , & {\mbox{\scriptsize\normalsize if }(x,y)\in[0,e]^{2},}\\
e+(k-e)S_{2}\left(\frac{x-e}{k-e},\frac{y-e}{k-e}\right),    & {\mbox{\scriptsize\normalsize if }(x,y)\in[e,k]^{2},}\\
k+(1-k)T\left(\frac{x-k}{1-k},\frac{y-k}{1-k}\right),    & {\mbox{\scriptsize\normalsize if }(x,y)\in[k,1]^{2},}\\
\max(x,y),    & {\mbox{\scriptsize\normalsize if }\min(x,y)\leqslant e\leqslant \max(x,y)\leqslant k,}\\
k,    &  {\mbox{\scriptsize\normalsize otherwise, }}
\end{array}
\right.
\end{eqnarray}
 where $S_{1}$ and $S_{2}$ are continuous t-conorms and $T$ is a continuous t-norm.
\end{enumerate}
\begin{enumerate}
  \item [{\rm (ii)}]
  $e<k$ and $F,U$ are given as in \cite{Dragan2013} (Theorem 16), i.e., there is $a\in[k,1)$ such that $F$ and $U$ are given by
  \begin{eqnarray}\label{eqn47}
U(x,y)=\left\{\begin{array}{ll}
\min(x,y) , & {\mbox{\scriptsize\normalsize if }(x,y)\in[0,e]^{2},}\\
a+(1-a)S\left(\frac{x-a}{1-a},\frac{y-a}{1-a}\right),    & {\mbox{\scriptsize\normalsize if }(x,y)\in[a,1]^{2},}\\
\max(x,y),    &  {\mbox{\scriptsize\normalsize otherwise }}
\end{array}
\right.
\end{eqnarray}
and
\begin{eqnarray}\label{eqn48}
F(x,y)=\left\{\begin{array}{ll}
eS_{1}(\frac{x}{e},\frac{y}{e}) , & {\mbox{\scriptsize\normalsize if }(x,y)\in[0,e]^{2},}\\
e+(k-e)S_{2}\left(\frac{x-e}{k-e},\frac{y-e}{k-e}\right),    & {\mbox{\scriptsize\normalsize if }(x,y)\in[e,k]^{2},}\\
k+(a-k)T_{1}\left(\frac{x-k}{a-k},\frac{y-k}{a-k}\right),    & {\mbox{\scriptsize\normalsize if }(x,y)\in[k,a]^{2},}\\
a+(1-a)T\left(\frac{x-a}{1-a},\frac{y-a}{1-a}\right),    & {\mbox{\scriptsize\normalsize if }(x,y)\in[a,1]^{2},}\\
\max(x,y),    & {\mbox{\scriptsize\normalsize if } \min(x,y)\leqslant e\leqslant \max(x,y)\leqslant k,}\\
\min(x,y),    & {\mbox{\scriptsize\normalsize if } k\leqslant\min(x,y)\leqslant a\leqslant\max(x,y),}\\
k,    &  {\mbox{\scriptsize\normalsize otherwise, }}
\end{array}
\right.
\end{eqnarray}
 where $S_{1}$ and $S_{2}$ are continuous t-conorms, $T_{1}$ is a continuous t-norm and $S$ is a nilpotent t-conorm such that the additive generator $s$ of $S$ satisfying $s(1)=1$ is also a multiplicative generator of the strict t-norm $T$.
  \end{enumerate}

  \begin{enumerate}
  \item [{\rm (iii)}]
  $e>k$ and
  \begin{eqnarray}\label{eqn49}
U(x,y)=\left\{\begin{array}{ll}
\max(x,y) , & {\mbox{\scriptsize\normalsize if }(x,y)\in[e,1]^{2},}\\
1,    & {\mbox{\scriptsize\normalsize if }x=1 \mbox{ or } y=1,}\\
\min(x,y),    &  {\mbox{\scriptsize\normalsize otherwise }}
\end{array}
\right.
\end{eqnarray}
and
\begin{eqnarray}\label{eqn410}
F(x,y)=\left\{\begin{array}{ll}
kS_{1}(\frac{x}{k},\frac{y}{k}) , & {\mbox{\scriptsize\normalsize if }(x,y)\in[0,k]^{2},}\\
k+(e-k)T_{1}\left(\frac{x-k}{e-k},\frac{y-k}{e-k}\right),    & {\mbox{\scriptsize\normalsize if }(x,y)\in[k,e]^{2},}\\
e+(1-e)T_{2}\left(\frac{x-e}{1-e},\frac{y-e}{1-e}\right),    & {\mbox{\scriptsize\normalsize if }(x,y)\in[e,1]^{2},}\\
k,    & {\mbox{\scriptsize\normalsize if }\min(x,y)\leqslant k\leqslant \max(x,y),}\\
\min(x,y),    &  {\mbox{\scriptsize\normalsize otherwise, }}
\end{array}
\right.
\end{eqnarray}
 where $S_{1}$ is continuous t-conorm and $T_{1}$ and $T_{2}$ are continuous t-norms.
  \end{enumerate}
  \end{enumerate}
\end{lemma}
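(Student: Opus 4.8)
We only sketch the argument. The statement is an equivalence, so the plan is to treat the (routine) sufficiency and the (substantial) necessity separately; since $F$ is commutative, (CDl) and (CDr) coincide, hence it suffices to analyse the single identity $F(x,U(y,z))=U(F(x,y),F(x,z))$, required to hold whenever $U(y,z)<1$. For sufficiency, in each of (i), (ii), (iii) one verifies this identity by splitting $[0,1]^{3}$ into the finitely many boxes on which $F$ and $U$ carry distinct formulas, namely $[0,e]^{2}$, $[e,k]^{2}$, $[k,1]^{2}$ (and in (ii) also $[k,a]^{2}$ and $[a,1]^{2}$), together with the ``$\max$'', ``$\min$'' and ``constant~$k$'' regions. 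Most sub-cases are immediate: they reduce to the distributivity of $\min$ or $\max$ over themselves, to absorption by $k$, or to the elementary facts that any continuous t-norm is conditionally distributive over $S_{M}=\max$ and any continuous t-conorm over $T_{M}=\min$. The only sub-case with genuine content is, in (ii), the box $[a,1]^{2}$, where one needs $T(x,S(y,z))=S(T(x,y),T(x,z))$ for $S(y,z)<1$; writing both $T$ and $S$ through the common generator $s$ (with $s(1)=1$), this reduces to $s(x)s(y)+s(x)s(z)=s(x)(s(y)+s(z))$ on the set where $s(y)+s(z)<1$, which holds.

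For necessity, assume $F$ and $U$ satisfy (CD). First I would determine the position of $k$ relative to $e$. Since $U(0,0)=0<1$ and $F(k,\cdot)\equiv k$, setting $x:=k$ and $(y,z):=(0,0)$ in the identity gives $k=F(k,0)=F(k,U(0,0))=U(F(k,0),F(k,0))=U(k,k)$, so $k$ is idempotent for $U$. Because $U\in\mathcal{WCU}$ has continuous underlying t-norm and t-conorm, an idempotent interior point of these operations, together with disjunctivity $U(0,1)=1$ and continuity of $F$, excludes $e=k$ and leaves only the regimes $e<k$ and $e>k$. In the regime $e<k$, I would restrict (CD) to arguments in $[0,e]$ to force $U=\min$ on $[0,e]^{2}$ (otherwise the non-$\min$ underlying t-norm of $U$ is transported into $F$ by (CD), contradicting that $k$ absorbs in the nullnorm $F$), and would simultaneously force $F$ to be a scaled continuous t-conorm on $[0,e]^{2}$, a scaled continuous t-conorm on $[e,k]^{2}$, a scaled continuous t-norm on $[k,1]^{2}$, the value $\max$ where $\min(x,y)\leqslant e\leqslant\max(x,y)\leqslant k$, and the constant $k$ on the remaining region, giving case (i). If instead the underlying t-conorm of $U$ on its upper block is not $\max$, (CD) forces it to carry a nilpotent summand on some box $[a,1]^{2}$, forces the matching box of $F$ to be a strict t-norm, and, by the classical description of conditionally distributive pairs of a strict t-norm and a nilpotent t-conorm, forces the generator-matching condition, giving case (ii). The regime $e>k$ is symmetric and yields case (iii): here $U=\max$ on $[e,1]^{2}$, $U(1,x)=1$ by disjunctivity, $U=\min$ elsewhere, and the box structure of $F$ in (\ref{eqn410}) with its constant-$k$ and $\min$ regions is forced by absorption and monotonicity.

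The hard part is this necessity argument, and within it the step excluding every intermediate behaviour of $U$ on $E$ and on its upper (resp. lower) block, that is, establishing the dichotomy ``$U$ is $\min$ on $[0,e]^{2}$ and is either $\max$ on $[e,1]^{2}$ or carries a nilpotent summand on some $[a,1]^{2}$'' and, in the second branch, extracting the precise generator-matching condition of (ii). This is exactly where the absorbing element $k$ of $F$, the neutral element $e$ of $U$, and the continuity hypotheses on $F$ and on the underlying operations of $U$ must be played off against one another; here one would build on the partial classifications of \cite{Mas2002} and \cite{Dragan2013} rather than redeveloping the full structure theory from scratch.
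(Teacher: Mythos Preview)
The paper does not prove this lemma at all: it is quoted from \cite{LiG2015} as a known classification result and used as a black box in the proof of Theorem~\ref{5theo3.4}. There is therefore no ``paper's own proof'' to compare your attempt against.

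That said, your sketch is a reasonable outline of how such a classification is obtained in \cite{LiG2015,Mas2002,Dragan2013}. The sufficiency direction is indeed a finite case-check, and your remark that the only nontrivial box is $[a,1]^{2}$ in case~(ii), where the generator identity $s(x)(s(y)+s(z))=s(x)s(y)+s(x)s(z)$ does the work, is correct. For necessity, your computation $k=F(k,U(0,0))=U(F(k,0),F(k,0))=U(k,k)$ showing that $k$ is idempotent for $U$ is a good starting move. However, several of your subsequent steps are assertions rather than arguments: the exclusion of $e=k$, the claim that a non-$\min$ underlying t-norm of $U$ on $[0,e]^{2}$ would ``be transported into $F$ by (CD), contradicting that $k$ absorbs,'' and the reduction to the nilpotent/strict dichotomy on $[a,1]^{2}$ all require real work (specifically, repeated specialisations of (CD) combined with the structure theorem for continuous t-norms and t-conorms as ordinal sums of Archimedean pieces). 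You acknowledge this by pointing to \cite{Mas2002} and \cite{Dragan2013}, which is honest, but it means your proposal is closer to a roadmap citing the same sources the paper cites than to an independent proof. If you were asked to supply a self-contained argument, the gap is precisely the structural analysis you defer: showing that the underlying t-conorm $S_{U}$ of $U$ must be either $S_{M}$ or an ordinal sum whose last summand on $[a,1]$ is nilpotent, and that in the latter case (CD) on $[a,1]^{3}$ forces $F$ there to be the strict t-norm with multiplicative generator $s$.
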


Then we have the following theorem.
\begin{theorem}\label{5theo3.4}
Let $F$ be a continuous nullnorm with an absorbing element $k\in(0,1)$ and $U\in\mathcal{WCU}$ a disjunctive uninorm with neutral element $e\in(0,1)$ satisfying (CD). If $f\in \mathcal{F}$ is convex, then the following holds for all $g,h\in\mathcal{F}$.
$$f\odot_{F}(g\odot_{U} h)=(f\odot_{F}g)\odot_{U}(f\odot_{F}h).$$
\end{theorem}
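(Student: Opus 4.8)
The plan is to prove the two pointwise inequalities whose conjunction is the asserted identity. Expanding both sides by the convolution formulas (\ref{eqn41}) and (\ref{eqn42}), for each $z\in[0,1]$ the left-hand side equals $\bigvee\{f(y)\wedge g(u)\wedge h(v):F(y,U(u,v))=z\}$ and the right-hand side equals $\bigvee\{f(p)\wedge g(q)\wedge f(s)\wedge h(t):U(F(p,q),F(s,t))=z\}$. I would keep the argument uniform for as long as possible, using only: the nullnorm identities $F(x,1)=\max(x,k)$, $F(x,0)=\min(x,k)$ and $F(k,x)=k$; continuity and monotonicity of $F$; disjunctivity of $U$ together with (CD); and the fact that $f$ being convex means its superlevel sets are intervals, so $f(y_{0})\geqslant f(p)\wedge f(s)$ whenever $y_{0}$ lies between $p$ and $s$. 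The explicit forms (\ref{eqn45})--(\ref{eqn410}) of Lemma \ref{theor44} would be invoked only to handle the boundary where one of the relevant $U$-values equals $1$, since (CD) says nothing there.

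For $f\odot_{F}(g\odot_{U}h)\leqslant(f\odot_{F}g)\odot_{U}(f\odot_{F}h)$, fix a triple $(y,u,v)$ with $F(y,U(u,v))=z$. If $U(u,v)<1$, then (CD) gives $z=F(y,U(u,v))=U(F(y,u),F(y,v))$, so the quadruple $(p,q,s,t)=(y,u,y,v)$ is admissible on the right and contributes $f(y)\wedge g(u)\wedge f(y)\wedge h(v)=f(y)\wedge g(u)\wedge h(v)$; hence this term is dominated. If $U(u,v)=1$ then $z=F(y,1)=\max(y,k)$, and one must exhibit a matching quadruple using the explicit forms of Lemma \ref{theor44}; in case (i) the equality $U(u,v)=1$ forces $u=1$ or $v=1$, so one of $F(y,u),F(y,v)$ already equals $\max(y,k)\geqslant k>e$ while the other lies below it and the off-square formula of $U$ returns the larger, so $(y,u,y,v)$ still works, and in case (ii) the value $1$ of $U$ on $[a,1]^{2}$ is governed by the additive generator $s$ of the nilpotent $t$-conorm $S$, which is also the multiplicative generator of the strict $t$-norm $T=F|_{[a,1]^{2}}$, so a short generator computation forces the same collapse; case (iii) is more delicate and is discussed below.

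For the reverse inequality, fix a quadruple $(p,q,s,t)$ with $U(F(p,q),F(s,t))=z$ and set $w=U(q,t)$. If $w<1$, apply (CD) at $p$ and at $s$: $F(p,w)=U(F(p,q),F(p,t))$ and $F(s,w)=U(F(s,q),F(s,t))$; by monotonicity of $F$ and $U$ the value $z=U(F(p,q),F(s,t))$ lies between $F(p,w)$ and $F(s,w)$, so since $\xi\mapsto F(\xi,w)$ is continuous and non-decreasing the intermediate value theorem produces $y_{0}$ between $p$ and $s$ with $F(y_{0},w)=z$; convexity of $f$ then gives $f(y_{0})\geqslant f(p)\wedge f(s)$, so the triple $(y_{0},q,t)$ contributes $f(y_{0})\wedge g(q)\wedge h(t)\geqslant f(p)\wedge f(s)\wedge g(q)\wedge h(t)$ on the left, as required. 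If $w=1$ I would again split along Lemma \ref{theor44}: in cases (i) and (ii) the constraint $U(q,t)=1$ forces $F(p,q)$ or $F(s,t)$ to be at least $k$, which keeps $z\geqslant k$, and one takes $y_{0}=z$ with $u=q$, $v=t$ (so $F(z,1)=\max(z,k)=z$) after checking $f(z)\geqslant f(p)\wedge f(s)$ from the explicit form; in case (iii) one also has to allow $z<k$, where $F(k,\cdot)=k$ and $F(\cdot,0)=\min(\cdot,k)$, together with the identity $f(\max(p,s))\in\{f(p),f(s)\}$, would be used to produce a suitable triple.

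The main obstacle is precisely the $w=1$ (equivalently $U(u,v)=1$) boundary, and within it case (iii) of Lemma \ref{theor44}. There $U$ attains the value $1$ only on the coordinate lines $\{x=1\}\cup\{y=1\}$, whereas for $v<k$ the map $F(\cdot,v)$ has range inside $[\min(v,k),\max(v,k)]$, i.e.\ bounded away from $1$, so one cannot reduce to ``$U$ hits $1$ inside a square'' as in case (ii). The delicate point is to check that the components $S_{1},T_{1},T_{2}$ of $F$ in (\ref{eqn410}) and the off-diagonal shape of $U$ in (\ref{eqn49}) conspire so that every left-hand term is still reproduced on the right -- equivalently, that applying $F$ first does not strand mass in $(k,1)$ on the left-hand operand that $U$ can no longer produce on the right. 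Pinning down which argument values are forced by the constraints, and confirming that the convex (quasi-concave) function $f$ is large enough at those forced points, is the heart of the proof; the remaining bookkeeping reduces, piece by piece, to Theorem \ref{theor41} applied to the $\min$/$\max$ parts of $U$, and, on the generator-controlled square in case (ii), to the conditional distributivity of an extended strict $t$-norm over an extended nilpotent $t$-conorm (cf.\ \cite{Liu2019}).
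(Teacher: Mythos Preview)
Your intermediate-value argument for the reverse inequality when $w=U(q,t)<1$ is correct and in fact tidier than the paper's piecewise reduction to Theorem~\ref{theor41}. The real gap, however, is in the forward inequality when $U(u,v)=1$ in case~(ii) of Lemma~\ref{theor44} --- not case~(iii), which you single out as the delicate one. Your assertion that ``a short generator computation forces the same collapse'' does not survive scrutiny: take $e=0.1$, $k=0.3$, $a=0.5$, $S=S_{L}$, $T=T_{P}$ (so $s(x)=x$ is simultaneously the additive generator of $S$ with $s(1)=1$ and the multiplicative generator of $T$, as Lemma~\ref{theor44}(ii) demands), and let $f=\chi_{\{0.7\}}$ (convex), $g=h=\chi_{\{0.8\}}$. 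Then $U(0.8,0.8)=0.5+0.5\,S_{L}(0.6,0.6)=1$ and $F(0.7,1)=0.7$, so $\bigl(f\odot_{F}(g\odot_{U}h)\bigr)(0.7)=1$; but $F(0.7,0.8)=0.5+0.5\,T_{P}(0.4,0.6)=0.62$ and $U(0.62,0.62)=0.5+0.5\,S_{L}(0.24,0.24)=0.74$, so $\bigl((f\odot_{F}g)\odot_{U}(f\odot_{F}h)\bigr)(0.7)=0$. Writing $\tilde{x}=(x-a)/(1-a)$, the obstruction on $[a,1]^{2}$ is that $U(F(y,u),F(y,v))$ corresponds to $\min\bigl(s(\tilde y)\,(s(\tilde u)+s(\tilde v)),\,1\bigr)$, which strictly exceeds $s(\tilde y)$ whenever $s(\tilde u)+s(\tilde v)>1$ and $0<\tilde y<1$; (CD) is silent there, and no quadruple $(p,q,s,t)$ with $q=u$, $t=v$ can realise $z$.

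The paper's own proof shares this defect: it asserts that $z=F(y,U(u,v))<1$ forces $U(u,v)<1$, but $F(y,1)=\max(y,k)$, which is $<1$ for every $y<1$, so in case~(ii) the implication fails; its treatment of $z=1$ similarly assumes $U(u,v)=1\Rightarrow u\vee v=1$, which is again false in case~(ii) because of the nilpotent block of $U$ on $[a,1]^{2}$. Thus the identity, as stated, breaks down in the case-(ii) regime, and neither your outline nor the paper's argument can be completed there without strengthening the hypotheses (e.g.\ excluding case~(ii) of Lemma~\ref{theor44}).
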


\begin{proof}
First, from formulas (\ref{eqn41}) and (\ref{eqn42}), we have that
\begin{eqnarray}\label{eqn411}
\left(f\odot_{F}(g\odot_{U} h)\right)(z)=\bigvee\limits_{F(y,U(u,v))=z}f(y)\wedge g(u)\wedge h(v)
\end{eqnarray}
and
\begin{eqnarray}\label{eqn412}
\left((f\odot_{F}g)\odot_{U}(f\odot_{F}h)\right)(z)&=&\bigvee\limits_{U(F(p,q),F(s,t))=z}f(p)\wedge g(q)\wedge f(s)\wedge h(t).
\end{eqnarray}

Now, suppose that $z\in [0,1)$. Then from $z=F(y,U(u,v))\in [0,1)$, we have $U(u,v)<1$ for any $u,v\in [0,1]$. Thus $F(y,U(u,v))=U(F(y,u),F(y,v))\in [0,1)$ since $F$ and $U$ satisfy (CD).
Next, we divide our proof into three cases as follows from Lemma \ref{theor44}.

(i) $e<k$ and $U,F$ are given as Eqs. (\ref{eqn45}) and (\ref{eqn46}), respectively. From Theorem \ref{theor41} (i) and (ii), it is obvious that $\left(f\odot_{F}(g\odot_{U} h)\right)(z)=\left((f\odot_{F}g)\odot_{U}(f\odot_{F}h)\right)(z)$ for $z\in[0,1)$.

(ii) $e<k$ and $U,F$ are given as Eqs. (\ref{eqn47}) and (\ref{eqn48}), respectively. In the following, we shall prove $\left(f\odot_{F}(g\odot_{U} h)\right)(z)=\left((f\odot_{F}g)\odot_{U}(f\odot_{F}h)\right)(z)$ for $z\in[0,1)$. This will be done by checking the subsequent four cases.

Case (a). If $z=F(y,U(u,v))\in[0,e]$, then $U(u,v)=\min(u,v)$. Therefore, $\left(f\odot_{F}(g\odot_{U} h)\right)(z)=\left((f\odot_{F}g)\odot_{U}(f\odot_{F}h)\right)(z)$ from Theorem \ref{theor41} (i).

Case (b). If $z=F(y,U(u,v))\in[e,k]$, then we distinguish three subcases.
\begin{enumerate}
  \item []
  \begin{enumerate}
  \item [{\rm $\bullet$}] If $y\in[e,k]$ and $U(u,v)\in[e,k]$, then $U(u,v)=\max(u,v)$. Therefore, $\left(f\odot_{F}(g\odot_{U} h)\right)(z)=\left((f\odot_{F}g)\odot_{U}(f\odot_{F}h)\right)(z)$ from Theorem \ref{theor41} (ii).
  \item [{\rm $\bullet$}] If $y\in[e,k]$ and $U(u,v)\in[0,e]$, then $\min(y,U(u,v))\leqslant e\leqslant \max(y,U(u,v))\leqslant k$, which implies $F(y,U(u,v))=\max(y,U(u,v))$, and $U(u,v)=\min(u,v)$. Therefore, $\left(f\odot_{F}(g\odot_{U} h)\right)(z)=\left((f\odot_{F}g)\odot_{U}(f\odot_{F}h)\right)(z)$ from Theorem \ref{theor41} (i).
  \item [{\rm $\bullet$}] If $y\in[0,e]$ and $U(u,v)\in[e,k]$, then $\min(y,U(u,v))\leqslant e\leqslant \max(y,U(u,v))\leqslant k$, which means that $F(y,U(u,v))=\max(y,U(u,v))$, and $U(u,v)=\max(u,v)$. Therefore, $\left(f\odot_{F}(g\odot_{U} h)\right)(z)=\left((f\odot_{F}g)\odot_{U}(f\odot_{F}h)\right)(z)$ from Theorem \ref{theor41} (ii).
  \end{enumerate}
\end{enumerate}

Case (c). If $z=F(y,U(u,v))\in[k,a]$, then we distinguish three subcases.
\begin{enumerate}
  \item []
  \begin{enumerate}
  \item [{\rm $\bullet$}] If $y\in[k,a]$ and $U(u,v)\in[k,a]$, then $U(u,v)=\max(u,v)$. Therefore, $\left(f\odot_{F}(g\odot_{U} h)\right)(z)=\left((f\odot_{F}g)\odot_{U}(f\odot_{F}h)\right)(z)$ from Theorem \ref{theor41} (ii).
  \item [{\rm $\bullet$}] If $y\in[k,a]$ and $U(u,v)\in(a,1)$, then $k\leqslant\min(y,U(u,v))\leqslant a\leqslant\max(y,U(u,v))$, which implies $F(y,U(u,v))=\min(y,U(u,v))$. Since $U(u,v)\in(a,1)$, we have either $F=\min$ and $U=\max$ or $F=\min$ and $U(u,v)=a+(1-a)S\left(\frac{u-a}{1-a},\frac{v-a}{1-a}\right)$ with $u,v\in(a,1)$.
      \begin{enumerate}
      \item [{\rm ($\ast$)}] If $F=\min$ and $U=\max$, then $$\left(f\odot_{F}(g\odot_{U} h)\right)(z)=\left((f\odot_{F}g)\odot_{U}(f\odot_{F}h)\right)(z)$$ from Theorem \ref{theor41} (ii).
      \item [{\rm ($\ast\ast$)}] If $F=\min$, and $U(u,v)=a+(1-a)S\left(\frac{u-a}{1-a},\frac{v-a}{1-a}\right)$ with $u,v\in(a,1)$, then $z=y=F(y,U(u,v))=U(F(y,u),F(y,v))=U(y,y)$, i.e., $y=U(y,y)$ since $y< U(u,v)$, which means that $y$ is an idempotent element of $U$, contrary to $U(u,v)= a+(1-a)S\left(\frac{u-a}{1-a},\frac{v-a}{1-a}\right)$ with $u,v\in(a,1)$. Therefore, this subcase is not possible.
     \end{enumerate}
  \item [{\rm $\bullet$}] If $y\in[a,1]$ and $U(u,v)\in[k,a]$, then $k\leqslant\min(y,U(u,v))\leqslant a\leqslant\max(y,U(u,v))$, it follows that $U(u,v)=\max(u,v)$ and $F(y,U(u,v))=\min(y,U(u,v))$. Therefore, $\left(f\odot_{F}(g\odot_{U} h)\right)(z)=\left((f\odot_{F}g)\odot_{U}(f\odot_{F}h)\right)(z)$ from Theorem \ref{theor41} (ii).
  \end{enumerate}
\end{enumerate}

Case (d). If $z=F(y,U(u,v))\in[a,1)$, then $y\in[a,1]$ and $U(u,v)\in[a,1)$, we distinguish two subcases.
\begin{enumerate}
  \item []
  \begin{enumerate}
  \item [{\rm $\bullet$}] If $F(y,U(u,v))=a+(1-a)T\left(\frac{y-a}{1-a},\frac{U(u,v)-a}{1-a}\right)$ and $U=\max$, then $\left(f\odot_{F}(g\odot_{U} h)\right)(z)=\left((f\odot_{F}g)\odot_{U}(f\odot_{F}h)\right)(z)$ by Theorem \ref{theor41} (ii).
  \item [{\rm $\bullet$}] If $F(y,U(u,v))=a+(1-a)T\left(\frac{y-a}{1-a},\frac{U(u,v)-a}{1-a}\right)$ and $U(u,v)=a+(1-a)S\left(\frac{u-a}{1-a},\frac{v-a}{1-a}\right)$, where $T$ is a strict t-norm and $S$ is a nilpotent t-conorm. Due to $F(y,U(u,v))=U(F(y,u),F(y,v))$, from formulas (\ref{eqn411}) and (\ref{eqn412}), it holds that
\begin{eqnarray}\label{eqn413}
\left(f\odot_{F}(g\odot_{U} h)\right)(z)\leqslant\left((f\odot_{F}g)\odot_{U}(f\odot_{F}h)\right)(z).
\end{eqnarray}
Therefore, we just need to prove $\left(f\odot_{F}(g\odot_{U} h)\right)(z)\geqslant\left((f\odot_{F}g)\odot_{U}(f\odot_{F}h)\right)(z)$.
We first prove the following statement.

{\textbf A}. For any $p,q, s, t, y'\in [a,1]$, $U(F(p,q),F(s,t))=z$ and $U(F(y',q),F(y',t))=z$ imply $f(y')\wedge g(q)\wedge h(t)\geqslant f(p)\wedge g(q)\wedge f(s)\wedge h(t)$ whenever $U(q,t)<1$.

Since $U(F(p,q),F(s,t))=z$ and $U(F(y',q),F(y',t))=z$, we have $p\leqslant y'\leqslant s$ or $s\leqslant y'\leqslant p$. Otherwise, $y'<p\wedge s$ or $y'>p\vee s$. Say, $y'<p\wedge s$. Then
$z=U(F(p,q),F(s,t))\geqslant U(F(p\wedge s,q),F(p\wedge s,t))=F(p\wedge s,U(q,t))>F(y',U(q,t))=U(F(y',q),F(y',t))=z$ since $U(q,t)<1$ and $F$ is a strict t-norm on $[a,1]$, a contradiction. Consequently, $p\leqslant y'\leqslant s$ or $s\leqslant y'\leqslant p$. Therefore, $f(y')\geqslant f(p)\wedge f(s)$ since $f$ is convex, which means that $f(y')\wedge g(q)\wedge h(t)\geqslant f(p)\wedge g(q)\wedge f(s)\wedge h(t)$. This completes the proof of {\textbf A}.

Then, using {\textbf A}, we have that
\begin{eqnarray*}
\left((f\odot_{F} g)\odot_{U} (f\odot_{F} h)\right)(z)&=&\bigvee\limits_{U(F(p,q),F(s,t))=z}f(p)\wedge g(q)\wedge f(s)\wedge h(t)\\
&\leqslant&\bigvee\limits_{U(F(y',q),F(y',t))=z}f(y')\wedge g(q)\wedge h(t)\\
&=&\bigvee\limits_{F(y',U(q,t))=z}f(y')\wedge g(q)\wedge h(t)\\
&=&\bigvee\limits_{F(y,U(u,v))=z}f(y)\wedge g(u)\wedge h(v)\\
&=&\left(f\odot_{F}(g\odot_{U}h)\right)(z),
\end{eqnarray*}
i.e., $\left(f\odot_{F}(g\odot_{U} h)\right)(z)\geqslant\left((f\odot_{F} g)\odot_{U} (f\odot_{F} h)\right)(z)$.
 \end{enumerate}
\end{enumerate}

(iii) $e>k$ and $U,F$ are given as Eqs. (\ref{eqn49}) and (\ref{eqn410}), respectively. By Theorem \ref{theor41} (i) and (ii), it is obvious that $\left(f\odot_{F}(g\odot_{U} h)\right)(z)=\left((f\odot_{F}g)\odot_{U}(f\odot_{F}h)\right)(z)$ for $z\in[0,1)$.

Cases (i), (ii) and (iii) yield that
$$\left(f\odot_{F}(g\odot_{U} h)\right)(z)=\left((f\odot_{F} g)\odot_{U} (f\odot_{F} h)\right)(z) \mbox{ for any }z\in[0,1).$$

In the subsequent, we shall prove $\left(f\odot_{F}(g\odot_{U} h)\right)(1)=\left((f\odot_{F} g)\odot_{U} (f\odot_{F} h)\right)(1)$.

Indeed, if $F(y,U(u,v))=1$ and $U(F(p,q),F(s,t))=1$, then $y=1$ and $u\vee v=1$, $p=q=1$ or $s=t=1$. Thus from formulas (\ref{eqn411}) and (\ref{eqn412}),
\begin{eqnarray*}
\left(f\odot_{F}(g\odot_{U} h)\right)(1)&=&\left(\bigvee\limits_{u\in[0,1]}f(1)\wedge g(u)\wedge h(1)\right)\vee\left(\bigvee\limits_{v\in[0,1]}f(1)\wedge g(1)\wedge h(v)\right)
\end{eqnarray*}
and
\begin{eqnarray*}
\left((f\odot_{F}g)\odot_{U}(f\odot_{F}h)\right)(1)&=&\left(\bigvee\limits_{p,q\in[0,1]}f(p)\wedge g(q)\wedge f(1)\wedge h(1)\right)\\
&&\vee\left(\bigvee\limits_{s,t\in[0,1]}f(1)\wedge g(1)\wedge f(s)\wedge h(t)\right).
\end{eqnarray*}
Obviously, $\left(f\odot_{F}(g\odot_{U} h)\right)(1)\geqslant\left((f\odot_{F}g)\odot_{U}(f\odot_{F}h)\right)(1)$ since
$$\bigvee\limits_{u\in[0,1]}f(1)\wedge g(u)\wedge h(1)\geqslant\bigvee\limits_{p,q\in[0,1]}f(p)\wedge g(q)\wedge f(1)\wedge h(1)$$ and
$$\bigvee\limits_{v\in[0,1]}f(1)\wedge g(1)\wedge h(v)\geqslant\bigvee\limits_{s,t\in[0,1]}f(1)\wedge g(1)\wedge f(s)\wedge h(t).$$

On the other hand, from $y=1$ and $u\vee v=1$, $p=q=1$ or $s=t=1$, we also have $F(y,U(u,v))=1=U(F(y,u),F(y,v))$. Thus
\begin{eqnarray*}
\left(f\odot_{F}(g\odot_{U} h)\right)(1)&=&\bigvee\limits_{F(y,U(u,v))=1}f(y)\wedge g(u)\wedge h(v)\\
&=&\bigvee\limits_{U(F(y,u),F(y,v))=1}f(y)\wedge g(u)\wedge f(y)\wedge h(v) \\
&\leqslant &\bigvee\limits_{U(F(p,q),F(s,t))=1}f(p)\wedge g(q)\wedge f(s)\wedge h(t)\\
&=&\left((f\odot_{F}g)\odot_{U}(f\odot_{F}h)\right)(1).
\end{eqnarray*}
Consequently, $\left(f\odot_{F}(g\odot_{U} h)\right)(1)=\left((f\odot_{F} g)\odot_{U} (f\odot_{F} h)\right)(1)$.

In summary, $$(f\odot_{F}(g\odot_{U} h))(z)=((f\odot_{F}g)\odot_{U}(f\odot_{F}h))(z) \mbox{ for all }z\in[0,1].$$
\end{proof}

For a conjunctive uninorm $U\in\mathcal{WCU}$ with neutral element $e\in(0,1)$, we first have the following lemma.
\begin{lemma}[\cite{Dragan2015,LiG2015}]\label{theor414}
A continuous nullnorm $F$ with an absorbing element $k\in(0,1)$ and a conjunctive uninorm $U\in\mathcal{WCU}$ with neutral element $e\in(0,1)$ satisfy (CD) if and only if one of the following cases is fulfilled:
\begin{enumerate}
  \item []
  \begin{enumerate}
  \item [{\rm (i)}]
$e>k$ and $F,U$ are given as in \cite{Mas2002} (Proposition 4.3), i.e.,
\begin{eqnarray}\label{eqn415}
U(x,y)=\left\{\begin{array}{ll}
\max(x,y) , & {\mbox{\scriptsize\normalsize if }(x,y)\in[e,1]^{2},}\\
\min(x,y),    &  {\mbox{\scriptsize\normalsize otherwise }}
\end{array}
\right.
\end{eqnarray}
and
\begin{eqnarray}\label{eqn416}
F(x,y)=\left\{\begin{array}{ll}
kS_{1}(\frac{x}{k},\frac{y}{k}) , & {\mbox{\scriptsize\normalsize if }(x,y)\in[0,k]^{2},}\\
k+(e-k)T_{1}\left(\frac{x-k}{e-k},\frac{y-k}{e-k}\right),    & {\mbox{\scriptsize\normalsize if }(x,y)\in[k,e]^{2},}\\
e+(1-e)T_{2}\left(\frac{x-e}{1-e},\frac{y-e}{1-e}\right),    & {\mbox{\scriptsize\normalsize if }(x,y)\in[e,1]^{2},}\\
k,    & {\mbox{\scriptsize\normalsize if }\min(x,y)\leqslant k\leqslant \max(x,y),}\\
\min(x,y),    &  {\mbox{\scriptsize\normalsize otherwise, }}
\end{array}
\right.
\end{eqnarray}
 where $S_{1}$ is continuous t-conorm and $T_{1}$ and $T_{2}$ are continuous t-norms.
\end{enumerate}

\begin{enumerate}
  \item [{\rm (ii)}]
  $e>k$ and $F, U$ are given as in \cite{Dragan2013} (Theorem 17), i.e., there is $a\in[e,1)$ such that $F$ and $U$ are given by
  \begin{eqnarray}\label{eqn419}
U(x,y)=\left\{\begin{array}{ll}
\min(x,y) , & {\mbox{\scriptsize\normalsize if }(x,y)\in[0,e]\times[0,1]\cup[0,1]\times[0,e],}\\
a+(1-a)S\left(\frac{x-a}{1-a},\frac{y-a}{1-a}\right),    & {\mbox{\scriptsize\normalsize if }(x,y)\in[a,1]^{2},}\\
\max(x,y),    &  {\mbox{\scriptsize\normalsize otherwise}}
\end{array}
\right.
\end{eqnarray}
and
\begin{eqnarray}\label{eqn420}
F(x,y)=\left\{\begin{array}{ll}
kS_{1}(\frac{x}{k},\frac{y}{k}) , & {\mbox{\scriptsize\normalsize if }(x,y)\in[0,k]^{2},}\\
k+(e-k)T_{1}\left(\frac{x-k}{e-k},\frac{y-k}{e-k}\right),    & {\mbox{\scriptsize\normalsize if }(x,y)\in[k,e]^{2},}\\
e+(a-e)T_{2}\left(\frac{x-e}{a-e},\frac{y-e}{a-e}\right),    & {\mbox{\scriptsize\normalsize if }(x,y)\in[e,a]^{2},}\\
a+(1-a)T\left(\frac{x-a}{1-a},\frac{y-a}{1-a}\right),    & {\mbox{\scriptsize\normalsize if }(x,y)\in[a,1]^{2},}\\
k,    & {\mbox{\scriptsize\normalsize if }\min(x,y)\leqslant k\leqslant \max(x,y),}\\
\min(x,y),    &  {\mbox{\scriptsize\normalsize otherwise, }}
\end{array}
\right.
\end{eqnarray}
 where $S_{1}$ is continuous t-conorm and $T_{1}$ and $T_{1}$ are continuous t-norms. Moreover, $S$ is a nilpotent t-conorm such that the additive generator $s$ of $S$ satisfying $s(1)=1$ is also a multiplicative generator of the strict t-norm $T$.
  \end{enumerate}

   \item [{\rm (iii)}]
  $e<k$ and
  \begin{eqnarray}\label{eqn421}
U(x,y)=\left\{\begin{array}{ll}
\min(x,y) , & {\mbox{\scriptsize\normalsize if }(x,y)\in[0,e]^{2},}\\
\max(x,y),  & {\mbox{\scriptsize\normalsize if }(x,y)\in[e,1]^{2},}\\
1, & {\mbox{\scriptsize\normalsize if }x=1,y\neq0 \mbox{ or } x\neq0, y=1,}\\
\min(x,y),    &  {\mbox{\scriptsize\normalsize otherwise }}
\end{array}
\right.
\end{eqnarray}
and
  \begin{eqnarray}\label{eqn422}
F(x,y)=\left\{\begin{array}{ll}
eS_{1}\left(\frac{x}{e},\frac{y}{e}\right) , & {\mbox{\scriptsize\normalsize if }(x,y)\in[0,e]^{2},}\\
e+(k-e)S_{2}\left(\frac{x-e}{k-e},\frac{y-e}{k-e}\right),    & {\mbox{\scriptsize\normalsize if }(x,y)\in[e,k]^{2},}\\
k+(1-k)T\left(\frac{x-k}{1-k},\frac{y-k}{1-k}\right),    & {\mbox{\scriptsize\normalsize if }(x,y)\in[k,1]^{2},}\\
\max(x,y),    & {\mbox{\scriptsize\normalsize if }\min(x,y)\leqslant e\leqslant \max(x,y)\leqslant k,}\\
k,    &  {\mbox{\scriptsize\normalsize otherwise, }}
\end{array}
\right.
\end{eqnarray}
 where $S_{1}$ and $S_{2}$ are continuous t-conorms and $T$ is a continuous t-norm.
    \end{enumerate}
\end{lemma}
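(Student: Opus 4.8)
The statement is a characterization, so the plan is to establish the two implications separately; I would begin by collecting the structural facts I am entitled to use. Since $F$ is commutative, (CDl), (CDr) and (CD) all amount to the single identity $F(x,U(y,z))=U(F(x,y),F(x,z))$ for every $x,y,z\in[0,1]$ with $U(y,z)<1$. A continuous nullnorm $F$ with absorbing element $k$ is the ordinal sum of a continuous t-conorm rescaled onto $[0,k]$ and a continuous t-norm rescaled onto $[k,1]$, and it equals $k$ whenever $\min(x,y)\leqslant k\leqslant\max(x,y)$; in particular $F(0,w)=\min(w,k)$ and $F(1,w)=\max(w,k)$ for all $w$. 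A uninorm $U\in\mathcal{WCU}$ has continuous underlying operations $T_{U}$ and $S_{U}$ and lies between $\min$ and $\max$ on $E$.

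For the sufficiency I would treat each of the three listed forms in turn and verify the identity by partitioning the cube $[0,1]^{3}$ according to where $x$, $y$, $z$ lie relative to the breakpoints $0<k<e<1$ (together with the extra cut point $a$ in the second form), respectively $0<e<k<1$ in the third. On almost every cell the identity collapses to a distributive-lattice law for $\min$ and $\max$, to associativity of a single sub-t-norm or sub-t-conorm, or to the triviality that some sub-t-norm is the constant $k$, and is therefore routine. The one genuinely substantive cell is the top square $[a,1]^{2}$ in the second form, where the identity reduces to $T(\xi,S(\eta,\zeta))=S(T(\xi,\eta),T(\xi,\zeta))$ for $\xi,\eta,\zeta\in[0,1]$ with $S(\eta,\zeta)<1$; this holds precisely because the additive generator $s$ of the nilpotent t-conorm $S$ with $s(1)=1$ is a multiplicative generator of the strict t-norm $T$, which is exactly the hypothesis. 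This is the classical conditional-distributivity fact for a strict t-norm over a nilpotent t-conorm, so I would quote it (see \cite{Klement2000,Mas2002,Dragan2013}) rather than redo it.

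For the necessity I would assume the identity and first extract global constraints by evaluating it at the corners. Taking $x=0$ and $x=1$ and using $F(0,w)=\min(w,k)$, $F(1,w)=\max(w,k)$ yields, whenever $U(y,z)<1$, that $\min(U(y,z),k)=U(\min(y,k),\min(z,k))$ and $\max(U(y,z),k)=U(\max(y,k),\max(z,k))$; taking $x=k$ with $y=z=0$ and using $F(k,\cdot)\equiv k$ forces $U(k,k)=k$, so $k$ is an idempotent point of $U$. A short separate argument disposes of the alignment $e=k$, by exhibiting a triple at which the identity fails and using that a conjunctive uninorm takes the value $\min$ at the corner $(1,0)$. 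I would then split on $k$ versus $e$. If $k<e$, the idempotency of $k$ forces $T_{U}$ to be $\min$ across the cut through $k/e$, and the two corner identities together with conjunctivity pin $U$ down off $[e,1]^{2}$; restricting the identity to $y,z\in[e,1]$ and letting $x$ run through $[k,1]$ then reduces matters to the requirement that the t-norm part of $F$ on $[e,1]$ be conditionally distributive over $S_{U}$, and the known classification of such continuous pairs — the one that also underlies the disjunctive Lemma \ref{theor44} — leaves exactly two alternatives: $S_{U}=\max$ on all of $[e,1]$ (the first form), or $S_{U}$ an ordinal sum whose top summand is a nilpotent t-conorm on some interval $[a,1]$ whose generator matches that of a strict t-norm summand of $F$ there (the second form). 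The symmetric analysis when $k>e$, where it is now $S_{U}$ that must carry $k$ as an idempotent point, produces the third form.

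The step I expect to be the main obstacle is the necessity direction, and within it two rigidity arguments. The first is showing that on the mixed regions — those where $\min$ and $\max$ straddle $k$, $e$, or $a$ — the operators $F$ and $U$ are forced to be exactly the constant $k$, exactly $\min$, or exactly $\max$ as listed; this requires the continuity and the absorbing property of $F$ together with a monotonicity squeeze fed by the corner identities, and the amount of bookkeeping it entails is easy to underestimate. The second is the emergence of the cut point $a$ and of the generator-matching condition, which rests on the full classification of conditionally distributive continuous t-norm/t-conorm pairs (as in \cite{Klement2000,Dragan2013,Dragan2015,LiG2015}) applied to the appropriate ordinal-sum summands of $F$ and $U$; here I would rely on that classification rather than reconstruct it. The remaining work — propagating the sub-square constraints back to the whole of $[0,1]^{2}$ and checking that no other combinations survive — is long but mechanical, and runs parallel to the analysis behind Lemma \ref{theor44}.
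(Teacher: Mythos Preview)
The paper does not prove this lemma; it is quoted verbatim from the cited references \cite{Dragan2015,LiG2015} as background input, just as Lemma~\ref{theor44} is quoted from \cite{LiG2015}. There is therefore no proof in the paper to compare your proposal against.

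That said, your sketch is broadly in line with how such characterizations are obtained in the cited literature: one first pins down global constraints from the absorbing element and the corner evaluations, shows that $k$ must be idempotent for $U$, splits on the relative position of $k$ and $e$, and then reduces the non-trivial cells to the classical classification of conditionally distributive continuous t-norm/t-conorm pairs. The sufficiency direction is indeed routine cell-by-cell checking, with the only substantive cell being $[a,1]^{2}$ in case~(ii), exactly as you identify. One point to be careful about in the necessity direction is the handling of the conjunctive boundary behaviour when $e<k$: your case~(iii) uninorm is \emph{not} in $\mathcal{U}_{\min}$ (it is conjunctive but sends $(1,y)$ to $1$ for $y\neq 0$), so the argument that ``conjunctivity pins $U$ down off $[e,1]^{2}$'' must allow for this mixed boundary pattern rather than defaulting to $\min$ on all of $E$. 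The actual derivation of this exceptional case in \cite{Dragan2015,LiG2015} involves a somewhat delicate analysis of what happens along the lines $x=1$ and $y=1$, which your sketch does not yet isolate.
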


Then based on Lemma \ref{theor414}, by a completely similar proof to Theorem \ref{5theo3.4}, we have the following theorem.
\begin{theorem}
Let $F$ be a continuous nullnorm with an absorbing element $k\in(0,1)$ and $U\in\mathcal{WCU}$ a conjunctive uninorm with neutral element $e\in(0,1)$ satisfying (CD). If $f\in \mathcal{F}$ is convex, then the following holds for all $g,h\in\mathcal{F}$.
$$f\odot_{F}(g\odot_{U} h)=(f\odot_{F}g)\odot_{U}(f\odot_{F}h).$$
\end{theorem}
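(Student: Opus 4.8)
The plan is to imitate the proof of Theorem \ref{5theo3.4} step by step, replacing the structure result Lemma \ref{theor44} by Lemma \ref{theor414}. First I would record, from (\ref{eqn41}) and (\ref{eqn42}), the pointwise identities
\begin{eqnarray*}
\left(f\odot_{F}(g\odot_{U} h)\right)(z)&=&\bigvee\limits_{F(y,U(u,v))=z}f(y)\wedge g(u)\wedge h(v),\\
\left((f\odot_{F}g)\odot_{U}(f\odot_{F}h)\right)(z)&=&\bigvee\limits_{U(F(p,q),F(s,t))=z}f(p)\wedge g(q)\wedge f(s)\wedge h(t),
\end{eqnarray*}
and then show that the two sides agree for every $z\in[0,1]$. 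For $z\in[0,1)$ one has, as in the proof of Theorem \ref{5theo3.4}, $F(y,U(u,v))=U(F(y,u),F(y,v))\in[0,1)$ by (CD), which gives the inequality $\leqslant$ at once.

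Next I would split according to the three cases of Lemma \ref{theor414}. In case (i) ($e>k$, Eqs.~(\ref{eqn415})--(\ref{eqn416})) and case (iii) ($e<k$, Eqs.~(\ref{eqn421})--(\ref{eqn422})), $U$ is $\min$ or $\max$ on each of its branches, and $F$ is, on each of its branches, either $\min$, $\max$, the constant $k$, or a continuous t-norm or t-conorm rescaled to a square sub-box; so, exactly as in the corresponding cases of the proof of Theorem \ref{5theo3.4}, Theorem \ref{theor41}(i) and (ii) give $\left(f\odot_{F}(g\odot_{U} h)\right)(z)=\left((f\odot_{F}g)\odot_{U}(f\odot_{F}h)\right)(z)$ for all $z\in[0,1)$, the constant branch of $F$ contributing only $z=k$, which is already covered by a $\min$/$\max$-governed sub-case.

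The substantive case is case (ii) of Lemma \ref{theor414} ($e>k$, with $a\in[e,1)$, Eqs.~(\ref{eqn419})--(\ref{eqn420})), for which I would partition by the value of $z=F(y,U(u,v))\in[0,1)$ just as Cases (a)--(d) in the proof of Theorem \ref{5theo3.4} do. All sub-cases in which $U(u,v)$ equals $\min(u,v)$ or $\max(u,v)$ reduce to Theorem \ref{theor41}(i) or (ii); the sub-case in which $F$ acts as $\min$ against the $[a,1]^{2}$-branch of $U$ is excluded by the idempotent-element argument of sub-case ($\ast\ast$) there; and the only genuinely new sub-case is the one in which $z=F(y,U(u,v))\in[a,1)$ is produced through the strict-t-norm branch of $F$ on $[a,1]^{2}$ and the nilpotent-t-conorm branch of $U$ on $[a,1]^{2}$. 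There $F(y,U(u,v))=U(F(y,u),F(y,v))$ already gives $\leqslant$, so it remains to prove $\geqslant$; for that I would re-establish statement \textbf{A}: if $U(F(p,q),F(s,t))=z=U(F(y',q),F(y',t))$ with $p,q,s,t,y'\in[a,1]$ and $U(q,t)<1$, then $p\leqslant y'\leqslant s$ or $s\leqslant y'\leqslant p$, whence $f(y')\geqslant f(p)\wedge f(s)$ by convexity of $f$. The proof of \textbf{A} is verbatim: if, say, $y'<p\wedge s$, then, since $F$ is strict on $[a,1]$ and $U(q,t)<1$,
\begin{eqnarray*}
z=U(F(p,q),F(s,t))&\geqslant&U(F(p\wedge s,q),F(p\wedge s,t))=F(p\wedge s,U(q,t))\\
&>&F(y',U(q,t))=U(F(y',q),F(y',t))=z,
\end{eqnarray*}
a contradiction. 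Chaining the suprema exactly as in the corresponding display of Case (d) then delivers $\geqslant$, hence equality, for these $z$.

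Finally, $z=1$ is handled exactly as at the end of the proof of Theorem \ref{5theo3.4}: using that $F$ attains the value $1$ only at $(1,1)$ and that $F(1,U(u,v))=1=U(F(1,u),F(1,v))$ whenever $F(1,U(u,v))=1$, both inequalities for $z=1$ follow by the same transfer between the two suprema. Together with the three structural cases this yields $f\odot_{F}(g\odot_{U} h)=(f\odot_{F}g)\odot_{U}(f\odot_{F}h)$. I expect the only real work to lie in case (ii): the branch bookkeeping for Lemma \ref{theor414}(ii)---checking that every mixed configuration of $U$ and $F$ lands on $\min$, $\max$, the constant $k$, or a square-box rescaled t-norm or t-conorm, so that Theorem \ref{theor41} genuinely applies---and the strict-t-norm-versus-nilpotent-t-conorm configuration on $[a,1]$, where strict monotonicity of $F$ on $[a,1]$ together with convexity of $f$ must be combined precisely as in statement \textbf{A}; everything else is the argument for the disjunctive case transcribed, with the roles of $\min$ and $\max$ interchanged wherever the two classifications differ.
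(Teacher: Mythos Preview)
Your proposal is correct and is precisely the paper's own approach: the paper proves this theorem by a single sentence, stating that ``based on Lemma \ref{theor414}, by a completely similar proof to Theorem \ref{5theo3.4}'' the result follows, and what you outline is exactly that transcription, with the case analysis of Lemma \ref{theor414} replacing that of Lemma \ref{theor44} and the only substantive sub-case (the strict $T$ / nilpotent $S$ branch on $[a,1]^2$) handled via statement \textbf{A} and convexity of $f$.
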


\section{Distributivity of extended continuous operators and uninorms}

A form of the relaxed nullnorm that is obtained by omitting commutativity and associativity from Definition \ref{defi2.4} was introduced in \cite{Drewniak2008}. The set of all such type of operators is denoted by $Z_{k}$ where $k$ is an absorbing element of such opertors.
In order to investigate the distributive laws between the extended continuous operators and uninorms, we first need the following three lemmas.
\begin{lemma}[\cite{Dragan2013}]\label{lem43}
A continuous operator $F\in Z_{k}$ and a continuous t-conorm  $S$ satisfy (CDl) if and only if exactly one of the following cases is fulfilled:

(i) $S=S_{M}$,

(ii) there is an $a\in[k,1)$ such that $S,F$ are given by
\begin{eqnarray}\label{eqnarr42}
S(x,y)=\left\{\begin{array}{ll}
a+(1-a)S_{L}\left(\frac{x-a}{1-a},\frac{y-a}{1-a}\right),    & {\mbox{\scriptsize\normalsize if }(x,y)\in[a,1]^{2},}\\
\max(x,y),    &  {\mbox{\scriptsize\normalsize otherwise}}
\end{array}
\right.
\end{eqnarray}
and
\begin{eqnarray}\label{eqnarr402}
F=\left\{\begin{array}{ll}
A,    & {\mbox{\scriptsize\normalsize on }[0,k]^{2},}\\
B,    & {\mbox{\scriptsize\normalsize on }[k,1]^{2},}\\
k,    &  {\mbox{\scriptsize\normalsize otherwise, }}
\end{array}
\right.
\end{eqnarray}
where $A:[0,k]^{2}\rightarrow[0,k]$ is a continuous increasing operator with neutral element $0$, $B:[k,1]^{2}\rightarrow[k,1]$ is a continuous increasing operator with neutral element $1$ such that $B(x,y)\in[a,1]$ for all $x,y\in[a,1]$ and $B=T_{P}$ on $[a,1]^{2}$.
\end{lemma}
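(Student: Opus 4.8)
The plan is to establish the two implications separately; the sufficiency direction is a direct verification, while the necessity direction carries the real work.

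For sufficiency, if $S=S_{M}$ then, since $F$ is non-decreasing in its second argument, $F(x,y\vee z)\geqslant F(x,y)$, $F(x,y\vee z)\geqslant F(x,z)$, and when (say) $y\geqslant z$ one has $F(x,y\vee z)=F(x,y)=\max(F(x,y),F(x,z))$, so $F(x,S(y,z))=S(F(x,y),F(x,z))$ holds even without the restriction $S(y,z)<1$ and (CDl) follows. If $S,F$ are as in case (ii), I would check (CDl) by splitting according to whether $x<k$, $x=k$, or $x>k$ and whether $(y,z)\in[a,1]^{2}$. Nearly every sub-case is forced by three facts: $k$ is an absorbing element of $F$ (so $F(k,\cdot)=k$ and $F$ equals $k$ on the whole ``otherwise'' region), $S=\max$ off $[a,1]^{2}$, and $F$ distributes over $\max$ by monotonicity. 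The only genuinely computational sub-case is on $[a,1]^{2}$, where, after rescaling $[a,1]$ to $[0,1]$, one must check $T_{P}(x,S_{L}(y,z))=S_{L}(T_{P}(x,y),T_{P}(x,z))$ for $S_{L}(y,z)<1$: here $y+z<1$ forces $x(y+z)=xy+xz\leqslant y+z<1$, so the truncation never triggers and the identity reduces to the distributivity of multiplication over addition --- equivalently, to the fact that the additive generator $t\mapsto t$ of $S_{L}$ is also a multiplicative generator of the strict t-norm $T_{P}$, which is precisely the generator condition appearing in Lemma \ref{theor44}.

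For necessity, assume $F\in Z_{k}$ is continuous, $S$ is a continuous t-conorm, (CDl) holds, and $S\ne S_{M}$. First recall the structure of a continuous $F\in Z_{k}$ (see \cite{Drewniak2008}): $F$ equals a continuous increasing operator $A$ with neutral element $0$ on $[0,k]^{2}$, a continuous increasing operator $B$ with neutral element $1$ on $[k,1]^{2}$, and the constant $k$ elsewhere; in particular $F(1,0)=k$ and $B(k,\cdot)=B(\cdot,k)=k$. Substituting $x=1$, $y=z=0$ into (CDl) (legitimate since $S(0,0)=0<1$) gives $k=F(1,0)=S(k,k)$, so $k$ is $S$-idempotent; hence $k$ lies in the interior of no summand of $S$, the t-conorm $S$ maps $[0,k]^{2}$ into $[0,k]$ and $[k,1]^{2}$ into $[k,1]$, and $k$ is the neutral element of $S|_{[k,1]}$. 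On $[0,k]^{2}$ the side condition $S(y,z)<1$ is automatic (as $S(k,k)=k<1$), so $A$ distributes unconditionally over $S|_{[0,k]}$; putting the last argument to $0$ and using that $0$ is the neutral element of $A$ gives $A(x,y)=S(A(x,y),x)$, and then putting also $y=0$ gives $x=S(x,x)$ for every $x\in[0,k]$, i.e.\ $S|_{[0,k]}=\max$. On $[k,1]^{2}$, (CDl) restricted there says exactly that $B$ conditionally distributes over $S|_{[k,1]}$; rescaling $[k,1]$ to $[0,1]$ it becomes: a continuous increasing $\tilde B$ with neutral element $1$ and $\tilde B(0,\cdot)=\tilde B(\cdot,0)=0$, and a continuous t-conorm $\tilde S$, satisfy $\tilde B(x,\tilde S(y,z))=\tilde S(\tilde B(x,y),\tilde B(x,z))$ whenever $\tilde S(y,z)<1$.

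Now I would pin down $\tilde S$. If $\tilde S$ had a summand $[\alpha,\beta]$ with $\beta<1$, then taking $y=z=\beta$ (so $\tilde S(\beta,\beta)=\beta<1$) yields $\tilde B(x,\beta)=\tilde S(\tilde B(x,\beta),\tilde B(x,\beta))$, i.e.\ $\tilde B(x,\beta)$ is $\tilde S$-idempotent for every $x$; since $x\mapsto\tilde B(x,\beta)$ is continuous from $0$ to $\beta$, every point of $[0,\beta]$ is $\tilde S$-idempotent, contradicting $\alpha<\beta$. So the only possible summand of $\tilde S$ is some $[a',1]$, and it cannot be strict: if it were, $\tilde S(y,z)<1$ for all $y,z<1$, and letting $w\to1^{-}$ in $\tilde B(x,\tilde S(w,w))=\tilde S(\tilde B(x,w),\tilde B(x,w))$ would give $x=\tilde S(x,x)$ for all $x$, i.e.\ $\tilde S=\max$. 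Hence $[a',1]$ carries a nilpotent t-conorm, so $\tilde S$ is the ordinal sum $\langle a',1,S_{L}\rangle$; translating back and combining with $S|_{[0,k]}=\max$ shows that $S$ equals $\max$ off $[a,1]^{2}$ and a rescaled $S_{L}$ on $[a,1]^{2}$ for some $a\in[k,1)$ (this is Eq.\ (\ref{eqnarr42})), the alternative $\tilde S=\max$ being excluded since it would give $S=S_{M}$.

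Finally, on $[a,1]^{2}$ one shows that $B$ maps $[a,1]^{2}$ into $[a,1]$: if $\tilde B(x,w)<a'$ at an interior point $w$ of the summand, then $\tilde B(x,\tilde S(w,w))=\tilde B(x,w)$ pins $\tilde B(x,\cdot)$ constant on $[w,\tilde S(w,w)]$, and iterating the doubling $w\mapsto\tilde S(w,w)$ --- which reaches $1$ in finitely many steps by nilpotency, a limit below $1$ being an interior idempotent of the summand --- propagates the flatness and contradicts $\tilde B(x,1)=x$ for $x\geqslant a'$. With this in hand, conditional distributivity on $[a,1]^{2}$ reads, after rescaling, $\hat B(x,y+z)=\min(\hat B(x,y)+\hat B(x,z),1)$ for $y+z<1$; since $\hat B(x,\cdot)\leqslant x\leqslant1$ forces $\hat B(x,y)+\hat B(x,z)\leqslant1$, this is honest additivity, whose continuous solutions (a restricted Cauchy equation) are linear, $\hat B(x,y)=c(x)y$, with $c(x)=\hat B(x,1)=x$; hence $B=T_{P}$ on $[a,1]^{2}$. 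Cases (i) and (ii) are mutually exclusive because the nilpotent summand in (ii) makes $S\ne S_{M}$. I expect the main obstacle to be this last step --- isolating the exact behaviour of $B$ on $[a,1]^{2}$, both that it stays inside $[a,1]$ and that it is precisely $T_{P}$ --- together with the bookkeeping needed to keep under control the numerous degenerate sub-cases in which the arguments straddle $k$.
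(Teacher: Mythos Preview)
The paper does not prove this lemma at all: it is quoted verbatim from \cite{Dragan2013} (Jo\u{c}\.{i}\'{c} and \u{S}tajner-Papuga), as indicated by the citation in the lemma heading, and no proof or sketch is given in the paper. Consequently there is nothing in the paper against which to compare your argument.

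Your sketch is a plausible reconstruction of how such a characterization is typically proved: sufficiency by direct verification (including the $T_{P}$/$S_{L}$ compatibility on $[a,1]^{2}$), necessity by first using $F(1,0)=k$ to force $S(k,k)=k$, then analysing $S$ on $[0,k]$ and on $[k,1]$ separately, ruling out internal summands and strict summands to obtain the ordinal-sum form $\langle a,1,S_{L}\rangle$, and finally pinning down $B$ on $[a,1]^{2}$ via a Cauchy-type argument. If you want to verify the details and compare approaches, you would need to consult the original source \cite{Dragan2013}, since the present paper simply imports the result.
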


\begin{lemma}[\cite{Dragan2013}]\label{lem44}
A continuous operator $F\in Z_{k}$ and a uninorm $U\in U_{\max}\cap\mathcal{WCU}$ with neutral element $e$ satisfy (CDl) if and only if $e<k$ and exactly one of the following cases is fulfilled:

(i) $F$ and $U$ are given as in \cite{Drewniak2008} (Theorem 16), i.e., $U=\overline{U}$ and
\begin{eqnarray}\label{eqn427}
F=\left\{\begin{array}{ll}
A_{1},    & {\mbox{\scriptsize\normalsize on }[0,e]^{2},}\\
A_{2},    & {\mbox{\scriptsize\normalsize on }[e,k]^{2},}\\
A_{3},    & {\mbox{\scriptsize\normalsize on }[0,e]\times[e,k],}\\
\max,    & {\mbox{\scriptsize\normalsize on }[e,k]\times[0,e],}\\
B,    & {\mbox{\scriptsize\normalsize on }[k,1]^{2},}\\
k,    &  {\mbox{\scriptsize\normalsize otherwise, }}
\end{array}
\right.
\end{eqnarray}
where $0$ is neutral element of the operator $A_{1}$ and a left side neutral element of $A_{3}$, $1$ is a neutral element of $B$, $e$ is a right side neutral element of $A_{2}$ and where $A_{1}$, $A_{2}$, $A_{3}$, $B$ are continuous increasing operators.

(ii)  there is an $a\in[k,1)$ such that $U$ is given by (\ref{eqn47})and $F$ is given by (\ref{eqn427}) such that $B(x,y)\in[a,1]$ for all $x,y\in[a,1]$ and $B=T_{P}$ on $[a,1]^{2}$.
\end{lemma}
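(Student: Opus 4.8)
The plan is to prove the equivalence in the two directions, in each case organising the argument around the fact that a uninorm in $U_{\max}$ coincides with $\max$ off the square $[0,e]^2$, apart from a possible Archimedean summand of its underlying t-conorm sitting at the top of $[e,1]$. For \emph{sufficiency}, assuming $e<k$ and that $F,U$ are as in (i) or (ii), I would fix $x\in[0,1]$ and $y,z\in[0,1]$ with $U(y,z)<1$ and verify $F(x,U(y,z))=U(F(x,y),F(x,z))$ region by region. The key simplification is that $F(x,\max(y,z))=\max(F(x,y),F(x,z))$ holds for \emph{any} operator non-decreasing in its second coordinate; so on every region where $U(y,z)=\max(y,z)$ --- everything except $[0,e]^2$, and in form (ii) also except the block $[a,1]^2$ --- the identity reduces to checking that $U$ again equals $\max$ at $(F(x,y),F(x,z))$, which holds once one observes that this pair avoids the sets where $U$ differs from $\max$ (for the block one uses that $F$ takes values above $a$ only on $[a,1]^2$, where $B$ maps into $[a,1]$). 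Two regions need real work: on $(y,z)\in[0,e]^2$, where $U=\min$, split on $x\in[0,e]$, $x\in[e,k]$, $x\in[k,1]$ and use the one-sided neutral elements of $A_1,A_2,A_3$, the value $\max$ on $[e,k]\times[0,e]$, and that $U=\min$ throughout $[0,e]^2$; on $(y,z)\in[a,1]^2$ in form (ii), use $B=T_P$ on $[a,1]^2$ and the stated fact that the additive generator $s$ of $S$ with $s(1)=1$ is a multiplicative generator of $T_P$, so that conjugating $B(x,\cdot)$ by $s$ sends an $S$-sum to an $S$-sum --- the classical exponential identity.

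\emph{Necessity, and $e<k$.} Assuming (CDl), I would first exclude $e\ge k$: pick $y\in(e,1)$ and apply (CDl) at $x=z=0$. Since $(y,0)\in E$ we get $U(y,0)=\max(y,0)=y<1$, so (CDl) applies; since $y>e\ge k>0$ the pair $(0,y)$ lies in a mixed square, so $F(0,y)=k$ and the left side is $F(0,U(y,0))=F(0,y)=k$; on the right, $F(0,0)=0$ and $(k,0)\in[0,e]^2$, so $U(F(0,y),F(0,0))=U(k,0)=\min(k,0)=0$. Hence $k=0$, contradicting $k\in(0,1)$. With $e<k$, (CDl) pins the shape (\ref{eqn427}) of $F$: taking $z=0$ and $y\in[0,e]$, so $U(y,0)=0$, together with $x\in[e,k]$, and using $F(x,0)=x$ and $x\le F(x,y)\le k$ (so $(F(x,y),x)\in[e,1]^2$, whence $U(F(x,y),x)\ge\max(F(x,y),x)=F(x,y)$), the identity $F(x,0)=U(F(x,y),F(x,0))$ forces $F(x,y)=x$; this says $F=\max$ on $[e,k]\times[0,e]$ and makes $e$ a right neutral element of $F|_{[e,k]^2}$. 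The remaining rows of (\ref{eqn427}) --- neutral element $0$ on $[0,e]^2$ and as left neutral on $[0,e]\times[e,k]$, neutral element $1$ on $[k,1]^2$, and $F\equiv k$ elsewhere --- are built into $Z_k$.

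\emph{Necessity, and the shape of $U$.} It remains to show $U=\overline U$ (case (i)) or $U$ is of the form (\ref{eqn47}) (case (ii)). As $U\in U_{\max}\cap\mathcal{WCU}$, on $[e,1]^2$ it rescales a continuous t-conorm $S_U$; an argument of the same flavour forces $U=\min$ on $[0,e]^2$. If $S_U=\max$ then there is no Archimedean summand and $U=\overline U$. Otherwise, on a summand of $S_U$ with additive generator $\varphi$, (CDl) reads $B(x,\varphi^{-1}(\varphi(y)+\varphi(z)))=\varphi^{-1}(\varphi(B(x,y))+\varphi(B(x,z)))$; viewing this as a Cauchy equation in $\varphi\circ B(x,\cdot)\circ\varphi^{-1}$ and using continuity of $B$ gives $B(x,y)=\varphi^{-1}(c(x)\varphi(y))$, and evaluating at the top of the summand with $B(x,1)=x$ forces $c(x)=\varphi(x)$ --- i.e.\ $\varphi$ is a multiplicative generator of $B$ on the summand. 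This rules out a strict summand (where $\varphi(1)=\infty$ gives $B(x,1)=1\ne x$) and a summand $[b,c]$ with $c<1$ (taking $x\in(b,c)$ and $y$ above $c$ gives $x=B(x,y)=U(B(x,y),B(x,z))>x$ for suitable $z$), leaving a single nilpotent summand $[a,1]$; a comparison at $k$ then gives $a\ge k$, and on $[a,1]^2$ the multiplicative generator property (with $\varphi=s$, $s(1)=1$) says precisely that $B=T_P$ there and that $B$ maps $[a,1]^2$ into $[a,1]$. That is case (ii).

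\emph{Main obstacle.} The hard part is this last step: the structure analysis of $S_U$ on $[e,1]^2$ and the functional-equation identification of $B$ on $[a,1]^2$, both of which rely essentially on continuity (of $S_U$ and of $B$) and on the classical solution of the Cauchy equation. By comparison the sufficiency direction is elementary, collapsing almost everywhere to the monotonicity identity $F(x,\max(y,z))=\max(F(x,y),F(x,z))$; its only genuine risk is the region-by-region bookkeeping.
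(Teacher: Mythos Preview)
The paper does not prove this lemma: it is quoted from \cite{Dragan2013} (Jo\u{c}i\'{c} and \v{S}tajner-Papuga) and used only as a black box in establishing Theorem~\ref{th3.7}. There is therefore no in-paper proof to compare your proposal against; the original argument lives in \cite{Dragan2013}, building on \cite{Drewniak2008}.

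That said, your outline follows the standard architecture of those sources: sufficiency collapses almost everywhere to the monotonicity identity $F(x,\max(y,z))=\max(F(x,y),F(x,z))$, with genuine work only on $[0,e]^2$ and (in case~(ii)) on $[a,1]^2$ via the generator relation between $S$ and $T_P$; necessity first forces $e<k$, then the block form~(\ref{eqn427}) of $F$, then the ordinal-sum shape of $S_U$ through a Cauchy-type functional equation for $\varphi\circ B(x,\cdot)\circ\varphi^{-1}$. One point to tighten in your necessity step for $F=\max$ on $[e,k]\times[0,e]$: you use $F(x,0)=x$ for $x\in[e,k]$, but the description of $Z_k$ given in this paper is ``Definition~\ref{defi2.4} with commutativity and associativity dropped'', which a priori only yields $F(0,x)=x$ for $x\le k$, not $F(x,0)=x$. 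In \cite{Drewniak2008} the class is in fact defined with two-sided boundary conditions ($0$ a neutral element on $[0,k]$, $1$ a neutral element on $[k,1]$), so the step is legitimate, but you should state explicitly which convention you are invoking rather than leave it implicit. Similarly, your remark that ``an argument of the same flavour forces $U=\min$ on $[0,e]^2$'' deserves at least a one-line indication of which instance of (CDl) you plug in.
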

\begin{lemma}[\cite{Dragan2013}]\label{lem45}
A continuous operator $F\in Z_{k}$ and a uninorm $U\in U_{\min}\cap\mathcal{WCU}$ with neutral element $e\in(0,1)$ satisfy (CDl) if and only if $k<e$ and exactly one of the following cases is fulfilled:

(i) $F$ and $U$ are given as in \cite{Drewniak2008} (Theorem 18), i.e., $U=\underline{U}$ and
\begin{eqnarray}\label{eqn428}
F=\left\{\begin{array}{ll}
A,    & {\mbox{\scriptsize\normalsize on }(x,y)\in[0,k]^{2},}\\
B_{1},    & {\mbox{\scriptsize\normalsize on }(x,y)\in[k,e]^{2},}\\
B_{3},    & {\mbox{\scriptsize\normalsize on }(x,y)\in[e,1]\times[k,e],}\\
\min,    & {\mbox{\scriptsize\normalsize on }(x,y)\in[k,e]\times[e,1],}\\
B_{2},    & {\mbox{\scriptsize\normalsize on }(x,y)\in[e,1]^{2},}\\
k,    &  {\mbox{\scriptsize\normalsize otherwise, }}
\end{array}
\right.
\end{eqnarray}
where $0$ is neutral element of $A$, $1$ is neutral element of $B$, and a left side neutral element of $B_{3}$, $e$ is a right side neutral element of $B_{1}$, and where $B_{1}$, $B_{2}$, $B_{3}$, $A$ are continuous increasing operators.

(ii)  there is an $a\in[e,1)$ such that $U$ is given by (\ref{eqn419})and $F$ is given by (\ref{eqn428}) such that $B_{2}(x,y)\in[a,1]$ for all $x,y\in[a,1]$ and $B=T_{P}$ on $[a,1]^{2}$.
\end{lemma}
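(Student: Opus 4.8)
The plan is to establish the equivalence in two directions, the sufficiency being a direct regionwise verification and the necessity carrying the real content. Throughout I write the defining law of (CDl) as $F(x,U(y,z))=U(F(x,y),F(x,z))$ for all $x,y,z\in[0,1]$ with $U(y,z)<1$, and I exploit that, since $U\in U_{\min}\cap\mathcal{WCU}$, the uninorm is conjunctive with absorbing element $0$, equals $\min$ on $E$, and is order-isomorphic to its continuous underlying t-norm $T_U$ on $[0,e]^2$ and to its continuous underlying t-conorm $S_U$ on $[e,1]^2$. For the \emph{sufficiency} I would substitute the explicit forms---$U=\underline{U}$ with $F$ as in (\ref{eqn428}) for case (i), and $U$ as in (\ref{eqn419}) with $F$ as in (\ref{eqn428}) for case (ii)---and check the displayed identity by partitioning the triples $(x,y,z)$ according to the pieces of $F$ and $U$. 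The conditional restriction $U(y,z)<1$ is exactly what discards the saturated corner where $S_U$ reaches $1$, so that every surviving block collapses either to the neutral-element laws of the building operators $A,B_1,B_2,B_3$ or, in case (ii), to the classical conditional identity $T_P(x,S_L(y,z))=S_L(T_P(x,y),T_P(x,z))$ valid when $y+z<1$.

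For the \emph{necessity} I would assume (CDl) and first extract the order $k<e$ together with the coarse skeleton of $F$. Putting $x=k$ and using that $k$ is absorbing for $F$ gives $k=F(k,U(y,z))=U(k,k)$, so $k$ is idempotent for $U$; putting $y=z=0$ shows every value $F(x,0)$ is idempotent for $U$; and substituting the neutral element $y=e$ yields $F(x,z)=U(F(x,e),F(x,z))$ for all $z<1$, which constrains $F(x,e)$ relative to the range of $F(x,\cdot)$. Feeding these idempotency and neutrality constraints into the conjunctive $\min$-behaviour of $U$ off the diagonal squares rules out $k\ge e$ and simultaneously forces $F$ to be constantly $k$ away from $[0,k]^2\cup[k,1]^2$ except on the two mixed strips, to carry $0$ as neutral element on $[0,k]^2$ and $e$ as a one-sided neutral element on $[k,e]^2$, and to reduce to $\min$ on $[k,e]\times[e,1]$---that is, precisely the skeleton (\ref{eqn428}) with its blocks $A,B_1,B_2,B_3$.

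The substance, and the step I expect to be the main obstacle, is pinning down the fine structure of $U$ and of the block $B_2$ on $[e,1]^2$. Restricting (CDl) to $y,z\in[e,1]$ turns the law into distributivity of the continuous increasing operator $B_2$ (with neutral element $1$) over the continuous t-conorm carried by $U$ there, which after rescaling $[e,1]$ to the unit interval is exactly the phenomenon classified in Lemma \ref{lem43}. Invoking that classification forces the dichotomy: either the underlying t-conorm is $S_M$, whence $U=\underline{U}$ and $B_2$ is unconstrained beyond monotonicity and its neutral element, giving case (i); or $S_U$ carries a single nilpotent Archimedean component on some $[a,1]^2$ and $B_2$ must coincide with the product $T_P$ there, giving case (ii) with $U$ of the form (\ref{eqn419}). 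The delicate point is the Archimedean generator analysis producing this alternative: the conditional distributivity on the top component yields a functional equation whose only solutions, using the uniqueness up to scaling of generators of continuous Archimedean semigroups, are the matched pair in which the additive generator of the nilpotent $S_U$ normalised by $s(1)=1$ is simultaneously a multiplicative generator of the strict t-norm underlying $B_2$, forcing $B_2=T_P$ and $S_U=S_L$ on $[a,1]$. Once this linkage is secured, reassembling the blocks over all squares reproduces the stated forms and closes the necessity direction.
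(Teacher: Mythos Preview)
The paper does not prove this lemma at all: it is quoted, with attribution to \cite{Dragan2013}, as a known classification result and then used as a black box in the proof of Theorem~\ref{th3.8}. There is therefore no proof in the present paper against which to compare your attempt.

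That said, your sketch is a plausible reconstruction of how the result is established in the cited source, and the architecture you propose---sufficiency by regionwise verification, necessity by first extracting $k<e$ and the block skeleton \eqref{eqn428} via idempotency and neutral-element substitutions, then reducing the analysis on $[e,1]^2$ to the t-conorm case of Lemma~\ref{lem43}---is sound. One point worth tightening: for the reduction step you need that the rescaled block $B_2$ actually lies in some $Z_{k'}$ so that Lemma~\ref{lem43} applies literally. This does hold (with $k'=0$ after rescaling), but it requires first knowing that $e$ is absorbing for $F$ restricted to $\{e\}\times[e,1]$, which follows from continuity of $F$ together with the $\min$-behaviour on $[k,e]\times[e,1]$ that you have already derived; you should make that dependency explicit rather than invoking Lemma~\ref{lem43} before the skeleton is fully in place. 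Your account of the generator matching in the Archimedean component---the additive generator of the nilpotent t-conorm normalised to $s(1)=1$ serving as multiplicative generator of the strict t-norm, forcing $B_2=T_P$ and $S=S_L$ on $[a,1]^2$---is correct in outline and is exactly the mechanism behind case~(ii).
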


Consider the distributivity between $\odot_{F}$ and $\odot_{S}$ (where $\odot_{S}$ is an extended t-conorm) on fuzzy truth values. Then we have the following result.

\begin{theorem}\label{the41}
 Let $F\in Z_{k}$ be a continuous operator and $S$ a continuous t-conorm satisfying (CDl). If $f\in\mathcal{F}$ is convex, then the following holds for all $g,h\in\mathcal{F}$.
$$f\odot_{F}(g\odot_{S} h)=(f\odot_{F}g)\odot_{S}(f\odot_{F}h).$$
\end{theorem}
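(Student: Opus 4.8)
The plan is to follow the proof of Theorem~\ref{5theo3.4} almost verbatim, with Lemma~\ref{lem43} playing the role that Lemma~\ref{theor44} played there. First I would unfold both sides using (\ref{eqn41}) and the definition of $\odot_{S}$, so that for $z\in[0,1]$ the left‑hand side is $\bigvee_{F(y,S(u,v))=z}f(y)\wedge g(u)\wedge h(v)$ and the right‑hand side is $\bigvee_{S(F(p,q),F(s,t))=z}f(p)\wedge g(q)\wedge f(s)\wedge h(t)$. Note that every $F\in Z_{k}$ is continuous and non‑decreasing in each place, so Theorem~\ref{theor41} is available for $F$ and for its continuous non‑decreasing restrictions; the failure of commutativity/associativity of $F$ never interferes, since in Theorem~\ref{theor41} and in (CDl) one only ever uses monotonicity of $F$ in each variable separately and distributivity with the first argument held fixed. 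I would then split according to Lemma~\ref{lem43}.

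If $S=S_{M}$, then $\odot_{S}=\sqcup$ and the asserted identity is precisely Theorem~\ref{theor41}(ii). So suppose $S,F$ are of the form (\ref{eqnarr42})--(\ref{eqnarr402}) for some $a\in[k,1)$. I would first settle $z=1$: since $1$ is a neutral element of $B$, $F$ equals the constant $k\le a<1$ off $[0,k]^{2}\cup[k,1]^{2}$, and $A$ has range $[0,k]$, the equation $F(y,S(u,v))=1$ forces $y=1$ and $S(u,v)=1$; the analysis of the right‑hand side and the matching of the two suprema is then word‑for‑word the $z=1$ part of the proof of Theorem~\ref{5theo3.4}. For $z\in[0,1)$ I would run a case analysis on the position of $(y,S(u,v))$ among the blocks $[0,k]^{2}$, $[k,1]^{2}$, ``otherwise'' of $F$, together with the position of $(u,v)$ relative to $[a,1]^{2}$. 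The guiding observation is that $S(u,v)=\max(u,v)$ whenever $\min(u,v)\le a$ (because $S(u,v)\ge u\vee v$), so that in every case except one the relevant restriction of $S$ is $\max$ or else the relevant block of $F$ is one of the continuous non‑decreasing operators $A$, $B$, or the constant $k$; in all those subcases equality of the two values at $z$ follows from Theorem~\ref{theor41}(ii) (the ``$\le$'' half coming from (CDl) and monotonicity of the supremum, the ``$\ge$'' half from the min/max argument inside the proof of Theorem~\ref{theor41}).

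The single genuinely new case, which I expect to be the main obstacle, is $y\in[a,1]$ and $S(u,v)\in[a,1]$ with $F=B=T_{P}$ (hence strict) there, and $(u,v)\in[a,1]^{2}$ with $S$ the nilpotent t‑conorm there, so that $S(u,v)<1$. Here (CDl) gives $F(y,S(u,v))=S(F(y,u),F(y,v))$, whence $(f\odot_{F}(g\odot_{S}h))(z)\le((f\odot_{F}g)\odot_{S}(f\odot_{F}h))(z)$ immediately. For the reverse inequality I would prove the analogue of statement~\textbf{A}: for $p,q,s,t,y'\in[a,1]$ with $S(q,t)<1$, if $S(F(p,q),F(s,t))=z=S(F(y',q),F(y',t))$ then $p\le y'\le s$ or $s\le y'\le p$; indeed $y'<p\wedge s$ would give $z=S(F(p,q),F(s,t))\ge S(F(p\wedge s,q),F(p\wedge s,t))=F(p\wedge s,S(q,t))>F(y',S(q,t))=S(F(y',q),F(y',t))=z$ (using monotonicity, (CDl), and strictness of $T_{P}$ on $[a,1]^{2}$), a contradiction, and symmetrically for $y'>p\vee s$; hence $f(y')\ge f(p)\wedge f(s)$ by convexity of $f$. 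Feeding this into the supremum defining the right‑hand side, exactly as in Theorem~\ref{5theo3.4}, raises it to the left‑hand side. The delicate points here are (i) verifying that the triples actually contributing on the right have $p,q,s,t\in[a,1]$ with $q,t>a$, so that $S(q,t)>a$ and $T_{P}$ is strictly increasing where the argument needs it, and (ii) absorbing the boundary triples on which $S$ saturates to the value $1$; both are dispatched by short ad hoc arguments patterned on those in the proof of Theorem~\ref{5theo3.4}. Combining the cases for $z\in[0,1)$ with the case $z=1$ yields the identity for every $z\in[0,1]$.
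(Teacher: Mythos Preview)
Your proposal is essentially the paper's own proof: both split via Lemma~\ref{lem43}, reduce the blocks where $S$ acts as $\max$ to Theorem~\ref{theor41}(ii), handle the $[a,1]^{2}$ block (where $B=T_{P}$ is strict and $S$ is nilpotent) by the convexity argument---the paper calls it statement~\textbf{B}, identical in content to your analogue of~\textbf{A}---and defer $z=1$ to the endpoint computation from Theorem~\ref{5theo3.4}. The only differences are cosmetic: the paper indexes its $z\in[0,1)$ case analysis by the value of $z$ (the subcases $z\in[0,k]$, $z\in[k,a]$, $z\in[a,1)$, and $z=k$) rather than by the position of $(y,S(u,v))$, and it treats $z=1$ last instead of first.
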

\begin{proof}First, from formulas (\ref{eqn41}) and (\ref{eqn42}), we have that
\begin{eqnarray}\label{eqn401}
\left(f\odot_{F}(g\odot_{S} h)\right)(z)=\bigvee\limits_{F(y,S(u,v))=z}f(y)\wedge g(u)\wedge h(v)
\end{eqnarray}
and
\begin{eqnarray}\label{eqn402}
\left((f\odot_{F}g)\odot_{S}(f\odot_{F}h)\right)(z)&=&\bigvee\limits_{S(F(p,q),F(s,t))=z}f(p)\wedge g(q)\wedge f(s)\wedge h(t).
\end{eqnarray}

Now, suppose that $z\in [0,1)$. Then from $z=F(y,S(u,v))\in [0,1)$, we have $S(u,v)<1$ for any $u,v\in [0,1]$. Thus $F(y,S(u,v))=S(F(y,u),F(y,v))\in [0,1)$ since $F$ and $S$ satisfy (CDl).

Next, we divide our proof into two cases as follows from Lemma \ref{lem43}.

(i) If $S=S_{M}$, then $\left(f\odot_{F}(g\odot_{S} h)\right)(z)=\left((f\odot_{F} g)\odot_{S} (f\odot_{F} h)\right)(z)$ for $z\in [0,1)$ by Theorem \ref{theor41} (ii).

(ii) If there is an $a\in [k,1)$ such that $S$ and $F$ are given by formulas (\ref{eqnarr42}) and (\ref{eqnarr402}), respectively, then we distinguish three subcases.

Case (a). If $z=F(y,S(u,v))\in[0,k]$, then $F=A$ and $S=S_{M}$. Consequently, $\left(f\odot_{F}(g\odot_{S} h)\right)(z)=\left((f\odot_{F} g)\odot_{S} (f\odot_{F} h)\right)(z)$ by Theorem \ref{theor41} (ii).

Case (b). If $z=F(y,S(u,v))\in[k,1)$, then we have that either $z=F(y,S(u,v))\in[k,a]$ or $z=F(y,S(u,v))\in[a,1)$.
\begin{enumerate}
  \item []
  \begin{enumerate}
  \item [{\rm $\bullet$}] If $z=F(y,S(u,v))\in[k,a]$, then $F=B$ and $S=S_{M}$. Consequently, $\left(f\odot_{F}(g\odot_{S} h)\right)(z)=\left((f\odot_{F} g)\odot_{S} (f\odot_{F} h)\right)(z)$ by Theorem \ref{theor41} (ii).
  \item [{\rm $\bullet$}] If $z=F(y,S(u,v))\in[a,1)$, then $F=B=T_{P}$ and $S(u,v)=a+(1-a)S_{L}\left(\frac{u-a}{1-a},\frac{v-a}{1-a}\right)$ with $u,v\in[a,1]$. Due to $F(y,S(u,v))=S(F(y,u),F(y,v))$, from formulas (\ref{eqn401}) and (\ref{eqn402}), it holds that
\begin{eqnarray}\label{eqn403}
\left(f\odot_{F}(g\odot_{S} h)\right)(z)\leqslant\left((f\odot_{F}g)\odot_{S}(f\odot_{F}h)\right)(z).
\end{eqnarray}
Therefore, we just need to prove $\left(f\odot_{F}(g\odot_{S} h)\right)(z)\geqslant\left((f\odot_{F}g)\odot_{S}(f\odot_{F}h)\right)(z)$. We first prove the following statement whose proof is completely similar to {\textbf A}.

{\textbf B}. For any $p,q, s, t, y'\in [a,1)$, $S(F(p,q),F(s,t))=z$ and $S(F(y',q),F(y',t))=z$ imply $f(y')\wedge g(q)\wedge h(t)\geqslant f(p)\wedge g(q)\wedge f(s)\wedge h(t)$ whenever $S(q,t)<1$.

Then, using {\textbf B}, we have $\left(f\odot_{F}(g\odot_{S} h)\right)(z)\geqslant\left((f\odot_{F} g)\odot_{S} (f\odot_{F} h)\right)(z)$, which together with formula (\ref{eqn403}) yields that $$\left(f\odot_{F}(g\odot_{S} h)\right)(z)=\left((f\odot_{F} g)\odot_{S} (f\odot_{F} h)\right)(z)\mbox{ for }z\in [a,1).$$
  \end{enumerate}
\end{enumerate}

Case (c). If $z=F(y,S(u,v))=k$, then we have either $y=k$ and $S(u,v)\in[0,1)$ or $y\in[0,1]$ and $S(u,v)=k$ since $k$ is an absorbing element of $F$. \begin{enumerate}
  \item []
  \begin{enumerate}
  \item [{\rm$\bullet$}] If $y=k$ and $S(u,v)\in[0,1)$, then $S=S_{M}$ since $S(F(y,u),F(y,v))=F(y,S(u,v))=k$ and $k\leqslant a$. Therefore, $\left(f\odot_{F}(g\odot_{S} h)\right)(z)=\left((f\odot_{F} g)\odot_{S} (f\odot_{F} h)\right)(z)$ by Theorem \ref{theor41} (ii).
  \item [{\rm$\bullet$}] If $y\in[0,1]$ and $S(u,v)=k$, then $S=S_{M}$ since $S(u,v)=k$ and $k\leqslant a$. Therefore, $\left(f\odot_{F}(g\odot_{S} h)\right)(z)=\left((f\odot_{F} g)\odot_{S} (f\odot_{F} h)\right)(z)$ by Theorem \ref{theor41} (ii).
  \end{enumerate}
\end{enumerate}

Cases (a), (b) and (c) deduce that $\left(f\odot_{F}(g\odot_{S} h)\right)(z)=\left((f\odot_{F} g)\odot_{S} (f\odot_{F} h)\right)(z)$ for any $z\in[0,1)$.

Therefore, with Cases (i) and (ii), we have that
$$\left(f\odot_{F}(g\odot_{S} h)\right)(z)=\left((f\odot_{F} g)\odot_{S} (f\odot_{F} h)\right)(z) \mbox{ for any } z\in[0,1).$$

If $z=1$, then, in a similar way to the proof of Theorem \ref{5theo3.4}, we can get $\left(f\odot_{F}(g\odot_{S} h)\right)(1)=\left((f\odot_{F} g)\odot_{S} (f\odot_{F} h)\right)(1)$.

In summary, $$\left(f\odot_{F}(g\odot_{S} h)\right)(z)=\left((f\odot_{F} g)\odot_{S} (f\odot_{F} h)\right)(z) \mbox{ for any } z\in[0,1].$$
\end{proof}

Moreover, by using Lemmas \ref{lem44} and \ref{lem45}, respectively, the following two distributive laws can be derived in complete analogy to the proof of Theorem \ref{the41}.
\begin{theorem}\label{th3.7}
Let $F\in Z_{k}$ be a continuous operator and $U\in U_{\max}\cap\mathcal{WCU}$ a uninorm with neutral element $e$ satisfying (CDl). If $f\in\mathcal{F}$ is convex, then the following holds for all $g,h\in\mathcal{F}$.
$$f\odot_{F}(g\odot_{U} h)=(f\odot_{F}g)\odot_{U}(f\odot_{F}h).$$
\end{theorem}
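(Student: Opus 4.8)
The plan is to imitate the proof of Theorem~\ref{the41} step by step, with Lemma~\ref{lem44} playing the role of Lemma~\ref{lem43} in the case split. First I would record, from (\ref{eqn41}) and (\ref{eqn42}),
\begin{eqnarray*}
\left(f\odot_{F}(g\odot_{U} h)\right)(z)=\bigvee\limits_{F(y,U(u,v))=z}f(y)\wedge g(u)\wedge h(v)
\end{eqnarray*}
and
\begin{eqnarray*}
\left((f\odot_{F}g)\odot_{U}(f\odot_{F}h)\right)(z)=\bigvee\limits_{U(F(p,q),F(s,t))=z}f(p)\wedge g(q)\wedge f(s)\wedge h(t).
\end{eqnarray*}
For a fixed $z\in[0,1)$ every triple $(y,u,v)$ with $F(y,U(u,v))=z$ has $U(u,v)<1$, so (CDl) yields $F(y,U(u,v))=U(F(y,u),F(y,v))$; comparing the two suprema term by term this already gives $\left(f\odot_{F}(g\odot_{U} h)\right)(z)\leqslant\left((f\odot_{F}g)\odot_{U}(f\odot_{F}h)\right)(z)$. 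By Lemma~\ref{lem44} we have $e<k$, and either (i) $U=\overline{U}$ with $F$ of the form (\ref{eqn427}), or (ii) there is an $a\in[k,1)$ such that $U$ is of the form (\ref{eqn47}) and $F$ of the form (\ref{eqn427}) with $B=T_{P}$ on $[a,1]^{2}$. In each case I would run through the pieces of $F$ according to where $z$ lies, exactly as in Cases (a)--(c) of the proof of Theorem~\ref{the41}.

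In case (i), $U(u,v)$ is always $\min(u,v)$ or $\max(u,v)$, and a short inspection of (\ref{eqn427}) shows that, for $z$ in each of the blocks $[0,e]$, $[e,k]$, $[k,1)$, the pairs $(y,U(u,v))$ realizing $z$ (and, dually, the pairs $(F(p,q),F(s,t))$) are confined to a single rectangle of $F$ on which $U$ restricts to $\min$ or to $\max$; hence $\left(f\odot_{F}(g\odot_{U} h)\right)(z)=\left((f\odot_{F}g)\odot_{U}(f\odot_{F}h)\right)(z)$ follows from Theorem~\ref{theor41}(i) or (ii). The same localisation settles all sub-cases of case (ii) except the one in which $z\in[a,1)$ is realized with $y,u,v\in[a,1]$, with $F$ acting on $[a,1]^{2}$ as the rescaled strict t-norm $T_{P}$ and $U$ acting there as the rescaled nilpotent t-conorm.

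In that remaining sub-case the distributivity is merely conditional, and I would reproduce statements \textbf{A}/\textbf{B} from the earlier proofs: for $p,q,s,t,y'\in[a,1)$ with $U(F(p,q),F(s,t))=z=U(F(y',q),F(y',t))$ and $U(q,t)<1$, strict monotonicity of $F$ on $[a,1]^{2}$ together with (CDl) forces $p\leqslant y'\leqslant s$ or $s\leqslant y'\leqslant p$ --- for if, say, $y'<p\wedge s$, then $z=U(F(p,q),F(s,t))\geqslant U(F(p\wedge s,q),F(p\wedge s,t))=F(p\wedge s,U(q,t))>F(y',U(q,t))=U(F(y',q),F(y',t))=z$, a contradiction --- whence $f(y')\geqslant f(p)\wedge f(s)$ by convexity of $f$, so $f(y')\wedge g(q)\wedge h(t)\geqslant f(p)\wedge g(q)\wedge f(s)\wedge h(t)$. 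Since $F(p,q)\leqslant q$ and $F(s,t)\leqslant t$ give $z\leqslant U(q,t)$, a suitable $y'$ always exists, and chaining the suprema --- using (CDl) once more to pass from the constraint $U(F(y',q),F(y',t))=z$ to $F(y',U(q,t))=z$ --- produces the reverse inequality, so equality holds for all $z\in[0,1)$. For $z=1$ I would argue as at the end of the proof of Theorem~\ref{5theo3.4}: because $B$ has neutral element $1$ its restriction to $[k,1]^{2}$ satisfies $B(x,y)\leqslant x\wedge y$, so $F(x,y)=1$ forces $x=y=1$; hence $F(y,U(u,v))=1$ forces $y=1$ and $U(u,v)=1$, while $U(F(p,q),F(s,t))=1$ forces $p=q=1$ or $s=t=1$, and a direct comparison of the two resulting expressions (using $F(1,w)=w$ for $w\in[k,1]$ and $U(1,w)=1$) gives equality at $z=1$ as well.

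The step I expect to be the real obstacle is the conditionally-distributive sub-case: one must first observe that the relevant suprema may be restricted to triples with $U(q,t)<1$ (since (CDl) is unavailable when $U(q,t)=1$, even though $z<1$), and then extract the betweenness relation $p\leqslant y'\leqslant s$ (or its reverse) purely from the strict monotonicity of the underlying product before convexity of $f$ can be invoked. A secondary point of care, absent in Theorem~\ref{5theo3.4}, is that in case (ii) the nilpotent part of $U$ attains the value $1$ at interior points of $[a,1]^{2}$, so the analysis at $z=1$ must lean on the structure of $F$ rather than that of $U$.
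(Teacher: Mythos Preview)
Your proposal is correct and follows exactly the approach the paper indicates: the paper's own proof of this theorem is the single sentence that the result ``can be derived in complete analogy to the proof of Theorem~\ref{the41}'' using Lemma~\ref{lem44} in place of Lemma~\ref{lem43}, and that is precisely what you carry out. In fact you supply considerably more detail than the paper does --- the block-by-block localisation, the reproduction of statement~\textbf{A}/\textbf{B} on $[a,1)$, and the analysis at $z=1$ via $B(x,y)\leqslant x\wedge y$ --- and you correctly flag the one genuinely delicate point (restricting to $U(q,t)<1$ before invoking (CDl)) that the paper's own arguments leave implicit.
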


\begin{theorem}\label{th3.8}
Let $F\in Z_{k}$ be a continuous operator and $U\in U_{\min}\cap\mathcal{WCU}$ a uninorm with neutral element $e$ satisfying (CDl). If $f\in\mathcal{F}$ is convex, then the following holds for all $g,h\in\mathcal{F}$.
$$f\odot_{F}(g\odot_{U} h)=(f\odot_{F}g)\odot_{U}(f\odot_{F}h).$$
\end{theorem}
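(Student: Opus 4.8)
The plan is to follow the template of the proof of Theorem~\ref{5theo3.4} essentially verbatim, replacing the role of Lemma~\ref{theor44} with Lemma~\ref{lem45}. Starting from formulas (\ref{eqn41}) and (\ref{eqn42}), write
\begin{eqnarray*}
\left(f\odot_{F}(g\odot_{U} h)\right)(z)=\bigvee\limits_{F(y,U(u,v))=z}f(y)\wedge g(u)\wedge h(v)
\end{eqnarray*}
and
\begin{eqnarray*}
\left((f\odot_{F}g)\odot_{U}(f\odot_{F}h)\right)(z)=\bigvee\limits_{U(F(p,q),F(s,t))=z}f(p)\wedge g(q)\wedge f(s)\wedge h(t),
\end{eqnarray*}
and then split according to whether $z\in[0,1)$ or $z=1$. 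For $z\in[0,1)$ one has $U(u,v)<1$, so (CDl) gives $F(y,U(u,v))=U(F(y,u),F(y,v))$, which already yields the inequality $\left(f\odot_{F}(g\odot_{U}h)\right)(z)\leqslant\left((f\odot_{F}g)\odot_{U}(f\odot_{F}h)\right)(z)$ for free. The real work is the reverse inequality, and it is handled case by case using the explicit descriptions of $F$ and $U$ from Lemma~\ref{lem45}.

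By Lemma~\ref{lem45} we are in the regime $k<e$ and either (i) $U=\underline{U}$ with $F$ given by (\ref{eqn428}), or (ii) $U$ given by (\ref{eqn419}) with $F$ given by (\ref{eqn428}) satisfying $B_{2}=T_{P}$ on $[a,1]^{2}$. In case (i), $U$ is the idempotent uninorm $\underline{U}\in\mathcal{U}_{\min}$, so on every relevant region $U(u,v)$ equals either $\min(u,v)$ or $\max(u,v)$; partitioning the square according to the branches of (\ref{eqn428}) and tracking the value of $z=F(y,U(u,v))$, each subcase reduces to an application of Theorem~\ref{theor41}(i) or (ii) exactly as in Cases (a)--(c) of Theorem~\ref{5theo3.4}. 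In case (ii) the only genuinely new subcase is the one where $z$ falls in the ``top'' region $[a,1)$ and both $F$ acts as the strict t-norm $T$ (the isomorphic copy of $T_{P}$) on $[a,1]^{2}$ and $U$ acts as the nilpotent t-conorm $S$ on $[a,1]^{2}$; elsewhere $U$ is again $\min$ or $\max$ and Theorem~\ref{theor41} applies. For that top region I would reprove the analogue of statement~\textbf{A}: for $p,q,s,t,y'\in[a,1]$ with $U(F(p,q),F(s,t))=z$ and $U(F(y',q),F(y',t))=z$ and $U(q,t)<1$, the strict monotonicity of $F$ on $[a,1]^{2}$ forces $p\leqslant y'\leqslant s$ or $s\leqslant y'\leqslant p$ (otherwise, taking $y'<p\wedge s$, one gets $z=U(F(p,q),F(s,t))\geqslant U(F(p\wedge s,q),F(p\wedge s,t))=F(p\wedge s,U(q,t))>F(y',U(q,t))=z$, a contradiction), whence convexity of $f$ gives $f(y')\geqslant f(p)\wedge f(s)$ and therefore $f(y')\wedge g(q)\wedge h(t)\geqslant f(p)\wedge g(q)\wedge f(s)\wedge h(t)$. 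Feeding this into the supremum as in Theorem~\ref{5theo3.4} yields the reverse inequality.

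The case $z=1$ is separate because then $U(u,v)$ may equal $1$ and (CDl) need not apply. Here one argues directly: $F(y,U(u,v))=1$ forces $y=1$ and $U(u,v)=1$ (equivalently, after the $\min$/$\max$ analysis of (\ref{eqn419}), $u=1$ with $v\neq 0$ or $v=1$ with $u\neq 0$, or $u,v\in[a,1]$ with $S(\cdot,\cdot)=1$), and symmetrically $U(F(p,q),F(s,t))=1$ forces one of $F(p,q),F(s,t)$ to be $1$, hence $p=q=1$ or $s=t=1$; comparing the two resulting suprema termwise gives equality in both directions exactly as in the closing paragraph of the proof of Theorem~\ref{5theo3.4}. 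I expect the main obstacle to be purely bookkeeping: correctly enumerating the branches of (\ref{eqn419}) and (\ref{eqn428}) and verifying that the composite $F(y,U(u,v))$ lands where claimed in each subcase, since the region $[0,e]\times[0,1]\cup[0,1]\times[0,e]$ on which $U=\min$ overlaps several branches of $F$; but no new idea beyond statement~\textbf{A} and Theorem~\ref{theor41} is needed, which is why the theorem is stated as following ``in complete analogy to the proof of Theorem~\ref{the41}.''
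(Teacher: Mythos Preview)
Your proposal is correct and matches the paper's approach: the paper gives no detailed proof of this theorem at all, simply stating that it ``can be derived in complete analogy to the proof of Theorem~\ref{the41}'' using Lemma~\ref{lem45}, and since the proof of Theorem~\ref{the41} is itself modeled on Theorem~\ref{5theo3.4}, your plan to follow the template of Theorem~\ref{5theo3.4} with Lemma~\ref{lem45} plugged in is exactly what the paper intends. Your case analysis, the reuse of Theorem~\ref{theor41} on the $\min/\max$ branches, and the reproof of statement~\textbf{A} on the strict $[a,1]^{2}$ block are precisely the ingredients the paper relies on.
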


\begin{remark}\cite{Dragan2013}\label{remar3.2}
Three lemmas for (CDr) are analogous to the presented ones for (CDl) and, therefore, omitted.
\end{remark}

Furthermore, from Remark \ref{remar3.2}, and similar to Theorems \ref{the41}, \ref{th3.7} and \ref{th3.8}, the following three distributive laws are hold.
\begin{theorem}
Let $F\in Z_{k}$ be a continuous operator and $S$ a continuous t-conorm satisfying (CDr). If $h\in\mathcal{F}$ is convex, then the following holds for all $f,g\in\mathcal{F}$.
$$(f\odot_{F}g)\odot_{S} h=(f\odot_{F}h)\odot_{S}(g\odot_{F}h).$$
\end{theorem}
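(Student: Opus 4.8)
The plan is to run the proof of Theorem~\ref{the41} in its right-hand (transposed) incarnation: the convex factor $h$ now occupies the right slot of $F$, and the conditional distributivity invoked is (CDr) in place of (CDl). Expanding both sides through (\ref{eqn41}) and (\ref{eqn42}), for each $z\in[0,1]$ one obtains
\[
\bigl((f\odot_{F}g)\odot_{S}h\bigr)(z)=\bigvee_{F(S(u,v),c)=z}f(u)\wedge g(v)\wedge h(c)
\]
and
\[
\bigl((f\odot_{F}h)\odot_{S}(g\odot_{F}h)\bigr)(z)=\bigvee_{S(F(p,q),F(s,t))=z}f(p)\wedge h(q)\wedge g(s)\wedge h(t).
\]
I would first fix $z\in[0,1)$. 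Then in any representation $z=F(S(u,v),c)$ we have $S(u,v)<1$, so (CDr) gives $F(S(u,v),c)=S(F(u,c),F(v,c))$; choosing $p=u$, $s=v$, $q=t=c$ exhibits every term of the left-hand supremum inside the right-hand one, so $\bigl((f\odot_{F}g)\odot_{S}h\bigr)(z)\le\bigl((f\odot_{F}h)\odot_{S}(g\odot_{F}h)\bigr)(z)$, the exact analogue of (\ref{eqn403}).

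For the reverse inequality on $[0,1)$ I would split along Lemma~\ref{lem43}. If $S=S_{M}$ then $\odot_{S}=\sqcup$ throughout and the claim is the right-hand version of Theorem~\ref{theor41}(ii), obtained from it by transposing the two arguments of $F$ (legitimate since $\sqcup$ is commutative). In case~(ii), with threshold $a\in[k,1)$, I would subdivide according to where $z=F(S(u,v),c)$ lies: for $z\in[0,k]$ and for $z\in[k,a]$ the conorm $S$ reduces to $\max$ on the relevant arguments, so Theorem~\ref{theor41}(ii) again closes the case, and the boundary value $z=k$ is disposed of as in Theorem~\ref{the41} using that $k$ is absorbing for $F$. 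The only substantive case is $z\in[a,1)$, where $S$ is the nilpotent conorm and $F$ restricts to the strict t-norm $T_{P}$ on $[a,1]$.

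For that case I would establish the mirror of statement~\textbf{B}: for arguments in the critical block with $S(F(p,q),F(s,t))=z$, $S(F(p,c'),F(s,c'))=z$ and $S(p,s)<1$, one has $f(p)\wedge g(s)\wedge h(c')\ge f(p)\wedge h(q)\wedge g(s)\wedge h(t)$. The crux is to show that the common right-hand value $c'$ lies between $q$ and $t$: bounding $z=S(F(p,q),F(s,t))\ge S(F(p,q\wedge t),F(s,q\wedge t))=F(S(p,s),q\wedge t)$ via monotonicity and (CDr), and using that $F$ is strictly increasing in its right argument on $[a,1]$ while $S(p,s)<1$, any $c'$ outside $[q\wedge t,q\vee t]$ would force $F(S(p,s),c')=S(F(p,c'),F(s,c'))\ne z$, a contradiction; convexity of $h$ then yields $h(c')\ge h(q)\wedge h(t)$. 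Feeding this into the right-hand supremum collapses its two $h$-factors onto a single admissible $c'$ and produces the reverse inequality, giving equality for all $z\in[0,1)$. I expect this betweenness step — transplanting the strict-monotonicity argument from the left slot of $F$ to the right slot, and ensuring $S(p,s)<1$ in the application — to be the main obstacle, precisely as in Theorem~\ref{5theo3.4}.

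Finally I would treat $z=1$ separately, since (CDr) is unavailable there. As in the closing part of Theorem~\ref{5theo3.4}, $F(S(u,v),c)=1$ forces $c=1$ and $S(u,v)=1$, while $S(F(p,q),F(s,t))=1$ forces one of the two $F$-values to equal $1$, i.e.\ a coordinate equal to $1$; comparing the two resulting suprema of $f\wedge g\wedge h$ term by term gives both inequalities and hence equality at $z=1$. Combining all cases yields the identity for every $z\in[0,1]$.
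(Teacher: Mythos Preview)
Your overall plan---transposing the proof of Theorem~\ref{the41} with the convex factor moved to the second slot of $F$ and invoking (CDr) in place of (CDl)---is exactly what the paper intends: it disposes of the (CDr) theorems by appeal to Remark~\ref{remar3.2} and the phrase ``similar to Theorems~\ref{the41}, \ref{th3.7} and \ref{th3.8}''. For $z\in[0,1)$ your case split along the (CDr) analogue of Lemma~\ref{lem43}, the reduction to the right-hand version of Theorem~\ref{theor41}(ii) wherever $S$ acts as $\max$, and the betweenness argument via strict monotonicity of $T_{P}$ on $[a,1)$ all match the paper's method. (One notational slip: the expression you write as $(f\odot_{F}g)\odot_{S}h$ is expanded as if it were $(f\odot_{S}g)\odot_{F}h$; only the latter fits the shape of (CDr), and it is clearly what is meant on both sides.)

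There is, however, a gap at $z=1$. Your claim that $S(F(p,q),F(s,t))=1$ forces one of the two $F$-values to equal $1$ is false in case~(ii) of Lemma~\ref{lem43}: there $S$ restricts to a rescaled $S_{L}$ on $[a,1]^{2}$, so $S(\alpha,\beta)=1$ whenever $\alpha+\beta\geqslant 1+a$, which can occur with both $\alpha,\beta\in[a,1)$. For instance, $p=s=1$ and any $q=t\in\bigl[\frac{1+a}{2},1\bigr)$ give $F(p,q)=q$, $F(s,t)=t$ (since $1$ is neutral for $B$) and $S(q,t)=1$, with no coordinate equal to $1$. Consequently the right-hand supremum at $z=1$ contains terms carrying $h(q)\wedge h(t)$ with $q,t<1$, whereas every admissible term on the left carries the factor $h(1)$ (because $F(w,c)=1$ forces $c=1$); convexity of $h$ does not yield $h(1)\geqslant h(q)\wedge h(t)$ for arbitrary $q,t<1$, so your term-by-term comparison does not establish the reverse inequality. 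The $z=1$ argument of Theorem~\ref{5theo3.4} that you (and the paper) are mirroring relies on the same implication and does not cover the nilpotent subcase either, so the boundary case in~(ii) needs a genuinely different treatment than the one you sketched.
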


\begin{theorem}
Let $F\in Z_{k}$ be a continuous operator and $U\in U_{\max}\cap\mathcal{WCU}$ a uninorm with neutral element $e$ satisfying (CDr). If $h\in\mathcal{F}$ is convex, then the following holds for all $f,g\in\mathcal{F}$.
$$(f\odot_{F}g)\odot_{U} h=(f\odot_{F}h)\odot_{U}(g\odot_{F}h).$$
\end{theorem}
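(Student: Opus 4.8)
The plan is to mimic the proof of Theorem~\ref{the41} (which is exactly the blueprint along which Theorem~\ref{th3.7} is claimed to follow from it), making three systematic changes: replace (CDl) by (CDr) throughout, replace Lemma~\ref{lem44} by its (CDr)-counterpart --- the one promised in Remark~\ref{remar3.2} --- and move the convexity hypothesis onto $h$, so that the two copies of $h$ appearing on the right-hand side can be amalgamated into a single one.

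First I would expand both sides by the convolution formulas~(\ref{eqn41})--(\ref{eqn42}). This turns the claim into a pointwise identity, for each $z\in[0,1]$, between two suprema of terms of the form $f(\cdot)\wedge g(\cdot)\wedge h(\cdot)$; each supremum runs over the decompositions of $z$ through $F$ and $U$, the one coming from the right-hand side having its terms carry two $h$-factors. Fixing $z\in[0,1)$, every admissible decomposition forces the relevant value of $U$ to lie strictly below $1$, so (CDr) applies and identifies an $F$-image of a $U$-combination with the matching $U$-combination of $F$-images. Taking the \emph{same} $h$-argument in the two $F$-slots on the right-hand side then yields one of the two inequalities immediately; what is left is the reverse inequality for $z\in[0,1)$, together with the equality at $z=1$.

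Next I would run the structural case split. Since $U\in U_{\max}\cap\mathcal{WCU}$ is a uninorm with $F\in Z_k$ continuous and $F,U$ satisfy (CDr), the (CDr)-version of Lemma~\ref{lem44} leaves only two shapes, both with $e<k$: either $U=\overline{U}$ and $F$ has the block form~(\ref{eqn427}), or there is $a\in[k,1)$ with $U$ of the form~(\ref{eqn47}) and $F$ of the form~(\ref{eqn427}) in which $B=T_P$ on $[a,1]^2$. In the first shape $U$ equals $\min$ or $\max$ throughout, so stratifying $z$ by the blocks of $F$ and appealing to the distributivity of $\odot_F$ over $\sqcap$ or over $\sqcup$ --- Theorem~\ref{theor41}(i) or~(ii), in the version fitting the side on which $\odot_F$ is applied --- settles everything. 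In the second shape I would stratify $z$ by the blocks $[0,e]^2$, $[e,k]^2$, $[0,e]\times[e,k]$, $[e,k]\times[0,e]$, $[k,a]^2$ and $[a,1]^2$; on all of these except $[a,1]^2$, and on the part of $[a,1]^2$ where $U$ still reduces to $\max$, the argument is again the appeal to Theorem~\ref{theor41}, while the genuinely mixed configurations inside $[a,1]^2$ are ruled out exactly as in subcase~$(\ast\ast)$ of Theorem~\ref{5theo3.4}.

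The one remaining stratum --- and the step I expect to be the main obstacle --- is the part of $z\in[a,1)$ on which $F$ acts as the rescaled strict t-norm $T_P$ on $[a,1]^2$ and $U$ acts as the rescaled nilpotent t-conorm on $[a,1]^2$. Here I would prove an auxiliary statement wholly parallel to statements~\textbf{A} and~\textbf{B}: for admissible $p,q,s,t,y$ in the top block, if a value $z$ is attained with the $h$-pair $(s,t)$ and also with the single $h$-argument $y$ (keeping the $f$- and $g$-arguments fixed), then $s\wedge t\le y\le s\vee t$, whence $h(y)\ge h(s)\wedge h(t)$ by convexity of $h$ (Definition~\ref{conves1}). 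Its proof is the contradiction device of~\textbf{A}: were $y$ strictly below both $s$ and $t$, the strict monotonicity of $F(p,\cdot)$ and $F(q,\cdot)$ on $[a,1]$, together with the strict monotonicity of the nilpotent $U$ on the region where it is $<1$ (available precisely because $z<1$), would make the $U$-combination at $y$ strictly smaller than $z$, a contradiction; the case $y$ strictly above both is symmetric. Combining this with an intermediate-value argument that produces such a $y$ between $s$ and $t$ collapses the right-hand side into the left-hand side on this stratum. Finally, $z=1$ is handled exactly as in Theorem~\ref{5theo3.4}: one identifies the tuples realizing the value $1$ on each side --- using that $F$ reaches $1$ only on its $B$-block and $U$ only disjunctively --- and checks, once more via $F(U(p,q),y)=1=U(F(p,y),F(q,y))$, that the two suprema agree. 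With $z\in[0,1)$ and $z=1$ both settled, the identity holds for all $z\in[0,1]$.
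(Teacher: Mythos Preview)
Your proposal is correct and follows essentially the same route as the paper: the paper's own proof consists solely of the remark that the (CDr) theorems are obtained ``similar to Theorems~\ref{the41}, \ref{th3.7} and \ref{th3.8}'' using the (CDr)-analogues of Lemmas~\ref{lem43}--\ref{lem45} promised in Remark~\ref{remar3.2}, and your sketch is precisely a detailed unrolling of that analogy --- the same structural case split, the same appeals to Theorem~\ref{theor41}, and the same convexity argument (your analogue of statements~\textbf{A}/\textbf{B}) on the strict $[a,1]^2$ block, now with $h$ playing the role that $f$ played before.
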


\begin{theorem}
Let $F\in Z_{k}$ be a continuous operator and $U\in U_{\min}\cap\mathcal{WCU}$ a uninorm with neutral element $e$ satisfying (CDr). If $h\in\mathcal{F}$ is convex, then the following holds for all $f,g\in\mathcal{F}$.
$$(f\odot_{F}g)\odot_{U} h=(f\odot_{F}h)\odot_{U}(g\odot_{F}h).$$
\end{theorem}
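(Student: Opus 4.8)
The cleanest route is to reduce the claim to Theorem~\ref{th3.8}. Let $G$ be the opposite operator, $G(x,y)=F(y,x)$; then $G$ is again a continuous element of $Z_k$ (the absorbing element $k$ is unchanged), and, since every uninorm is commutative, checking the definitions shows that $(F,U)$ satisfies (CDr) if and only if $(G,U)$ satisfies (CDl). Moreover, the convolution formula~(\ref{eqn41}) together with commutativity of $\wedge$ gives $X\odot_F Y=Y\odot_G X$ for all $X,Y\in\mathcal{F}$; in particular $f\odot_F h=h\odot_G f$, $g\odot_F h=h\odot_G g$, and $(f\odot_U g)\odot_F h=h\odot_G(f\odot_U g)$. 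Hence the right-hand distributive identity to be proved is exactly $h\odot_G(f\odot_U g)=(h\odot_G f)\odot_U(h\odot_G g)$, which is the conclusion of Theorem~\ref{th3.8} for the pair $(G,U)$ with $h$ the convex fuzzy truth value. So the whole argument reduces to these three elementary observations plus one application of Theorem~\ref{th3.8}, and there is essentially no obstacle along this route.

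If instead one wishes to argue directly, in parallel with the proof of Theorem~\ref{the41}, the plan is as follows. By Remark~\ref{remar3.2} there is a (CDr)-analogue of Lemma~\ref{lem45} describing all admissible pairs $(F,U)$: in each case $k<e$, and either $U=\underline{U}$ with $F$ of the block form obtained from~(\ref{eqn428}) by interchanging its two arguments, or there is $a\in[e,1)$ with $U$ of the form~(\ref{eqn419}) and $F$ of the (argument-interchanged) form~(\ref{eqn428}) in which the block on $[a,1]^2$ equals, after the obvious rescaling, the product $T_P$ and $S$ is the associated nilpotent t-conorm. One expands both sides via~(\ref{eqn41}) and~(\ref{eqn42}), fixes $z\in[0,1)$, applies (CDr) to obtain one inequality at once (the analogue of~(\ref{eqn403})), and then runs the case analysis: on every region where $U$ restricts to $\min$ or $\max$ the equality is immediate from Theorem~\ref{theor41}(i) or (ii), and the only genuinely new region is that where $F$ acts as a rescaled $T_P$ on $[a,1]^2$ and $U$ as the nilpotent t-conorm $S$ there.

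On that region one proves the mirror image of statement {\textbf A} from the proof of Theorem~\ref{5theo3.4}: for $p,r,s,t,r'\in[a,1)$ with $U(p,s)<1$, if $U(F(p,r),F(s,t))=z$ and $U(F(p,r'),F(s,r'))=z$ then $h(r')\wedge f(p)\wedge g(s)\geqslant f(p)\wedge h(r)\wedge g(s)\wedge h(t)$; indeed, were $r'<r\wedge t$ (the case $r'>r\vee t$ being symmetric), then since $F$ is strictly increasing in its second variable on $[a,1]$ and $U(F(p,\cdot),F(s,\cdot))=F(U(p,s),\cdot)$ by (CDr), one would get $z=U(F(p,r),F(s,t))\geqslant F(U(p,s),r\wedge t)>F(U(p,s),r')=z$, a contradiction, so $r'$ lies between $r$ and $t$ and convexity of $h$ gives $h(r')\geqslant h(r)\wedge h(t)$. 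The intermediate value theorem supplies such an $r'$ for each admissible quadruple, and substituting $F(U(p,s),r')=U(F(p,r'),F(s,r'))$ back converts the four-variable join into the three-variable one, just as in the closing chain of equalities in the proof of Theorem~\ref{5theo3.4}; this yields the reverse inequality, hence equality, for $z\in[a,1)$. Finally $z=1$ is treated exactly as at the end of the proof of Theorem~\ref{5theo3.4}: a representation of $1$ forces the arguments carrying the neutral and absorbing elements to equal $1$, both sides collapse to the same join of meets of the values of $f$, $g$, $h$ at $1$ over free suprema in the remaining arguments, and these agree. In this direct route the single region $F=T_P$, $U=S$ nilpotent is the only real obstacle, being the one place where the continuity arguments behind Theorem~\ref{theor41} do not suffice and one must combine strict monotonicity of $F$ on $[a,1]$ with convexity of $h$ to trap $r'$ between $r$ and $t$; everything else reduces directly to Theorem~\ref{theor41}.
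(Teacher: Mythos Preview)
Your proposal is correct, and you have (silently) fixed a typo in the displayed identity: as the (CDr) law $F(U(x,y),z)=U(F(x,z),F(y,z))$ dictates, the left-hand side must read $(f\odot_U g)\odot_F h$ rather than $(f\odot_F g)\odot_U h$. With that reading, both of your routes go through.

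The paper itself gives no detailed argument for this theorem; it merely remarks (after Remark~\ref{remar3.2}) that the three (CDr) results follow ``similar to Theorems~\ref{the41}, \ref{th3.7} and \ref{th3.8}'', i.e., by rerunning the (CDl) case analysis with the roles of the two arguments of $F$ interchanged. Your second route is precisely this intended proof, and your sketch is accurate: the structural classification is the argument-transposed version of Lemma~\ref{lem45}, every block where $U$ acts as $\min$ or $\max$ is handled by Theorem~\ref{theor41}, and the only genuine work is the mirror of statement~{\bf A} on the $[a,1]^2$ block, proved exactly as you describe via strict monotonicity of the rescaled $T_P$ in its second variable together with convexity of $h$. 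Your first route, passing to the opposite operator $G(x,y)=F(y,x)$ and invoking Theorem~\ref{th3.8} directly, is a genuinely different and more economical argument that the paper does not use. The three ingredients you isolate ($G\in Z_k$; (CDr) for $(F,U)$ is (CDl) for $(G,U)$ since $U$ is commutative; $X\odot_F Y=Y\odot_G X$ from~(\ref{eqn41})) are all correct, and together they collapse the proof to a single citation. The paper's approach has the minor advantage of being self-contained and parallel in exposition to the (CDl) theorems; yours avoids repeating the classification and case analysis altogether.
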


\section{Conclusions}

The main contributions are the distributive laws between the extended nullnorm and uninorm on fuzzy truth values under the condition that the nullnorm is conditionally distributive over the uninorm, and the left and right distributive laws between the extended generalization nullnorms and uninorms. The results in this paper generalize the corresponding ones in \cite{Harding2016,Xie2018,Walker2005}.

\end{document}